\documentclass[11pt]{amsart}
\usepackage{amssymb}
\usepackage{amsthm}
\usepackage{amsmath}
\usepackage[dvips]{epsfig}
\usepackage{graphicx}
\usepackage{hyperref}
\usepackage{color}
\usepackage{url}
\usepackage{lpic}
\usepackage{tikz}
\usepackage{caption}
\usepackage{subcaption}
\usepackage{epstopdf}
\usepackage{mathtools}
\usepackage[arrow, matrix, curve]{xy}
\usepackage{marginnote}

\usetikzlibrary{decorations.markings}

\makeatletter\@addtoreset {equation}{section}\makeatother

\setlength{\textheight}{8.25in}
\setlength{\oddsidemargin}{-0.1in}
\setlength{\evensidemargin}{0.in}
\setlength{\textwidth}{6.3in}

\newtheorem{theorem}{Theorem}

\newtheorem{lemma}{Lemma}
\theoremstyle{remark}
\newtheorem{remark}{Remark}
\theoremstyle{definition}
\newtheorem{definition}{Definition}
\theoremstyle{corollary}

{\begin{trivlist} \item[]{\em Proof }}%
{\hspace*{\fill}$\rule{.3\baselineskip}{.35\baselineskip}$\end{trivlist}}

\renewcommand{\O}{\mathcal{O}}

\DeclareMathOperator{\R}{\mathbb{R}}

\begin{document}

\title[Uniqueness and instability of the peaked periodic wave]{\bf Linear instability and uniqueness
of the peaked periodic wave in the reduced Ostrovsky equation}

\author{Anna Geyer}
\address[A. Geyer]{Delft Institute of Applied Mathematics, Faculty Electrical Engineering, Mathematics and
Computer Science, Delft University of Technology, Mekelweg 4, 2628 CD Delft, The Netherlands}
\email{A.Geyer@tudelft.nl}

\author{Dmitry E. Pelinovsky}
\address[D. Pelinovsky]{Department of Mathematics and Statistics, McMaster University,
Hamilton, Ontario, Canada, L8S 4K1}
\email{dmpeli@math.mcmaster.ca}
\address[D. Pelinovsky]{Department of Applied Mathematics, Nizhny Novgorod State Technical University,
24 Minin street, 603950 Nizhny Novgorod, Russia}

\keywords{Peaked periodic wave, reduced Ostrovsky equation, characteristics, semigroup, instability}

\begin{abstract}
Stability of the peaked periodic wave in the reduced Ostrovsky equation has remained an open problem for a  long time.
In order to solve this problem we obtain sharp bounds on the exponential growth of the $L^2$ norm of co-periodic perturbations to the
peaked periodic wave, from which it follows that the peaked periodic wave is linearly unstable.
We also prove that the peaked periodic wave with parabolic profile is the unique peaked wave
in the space of periodic $L^2$ functions with zero mean and a single minimum per period.
\end{abstract}

\date{\today}
\maketitle

\section{Introduction}

We address solutions of the Cauchy problem for the reduced Ostrovsky equation \cite{Ostrov}
written in the form
\begin{equation}
\label{redOst}
\left\{ \begin{array}{l} u_t + u u_x = \partial_x^{-1} u, \quad t > 0,\\
u|_{t=0} = u_0, \end{array} \right.
\end{equation}
where $u_0$ is a $2\pi$-periodic function with zero mean defined in the Sobolev space
$H^s_{\rm per}(-\pi,\pi)$ for some $s \geq 0$, which we simply write as $H^s_{\rm per}$.
We denote the subspace of $2\pi$-periodic functions with zero mean in $H^s_{\rm per}$
by $\dot{H}^s_{\rm per}$. The operator $\partial_x^{-1}: \dot{H}^s_{\rm per} \rightarrow \dot{H}^{s+1}_{\rm per}$
denotes the anti-derivative with zero mean, which can be defined using Fourier series.

The reduced Ostrovsky equation is also known under the names of Ostrovsky--Hunter and
Ostrovsky--Vakhnenko equation, due to contributions of Hunter \cite{Hunter}
and Vakhnenko \cite{vakh1}.

Local solutions to the Cauchy problem (\ref{redOst}) with $u_0 \in \dot{H}^s_{\rm per}$
exist for $s > \frac{3}{2}$ \cite{SSK10}, and we refer to \cite{LPS} for a discussion on how the well-posedness
in $H^s(\R)$ is extended to $\dot{H}^s_{\rm per}$.
For sufficiently large initial data, the local solutions break in finite time,
similar to the inviscid Burgers equation \cite{LPS}. However, if the initial data $u_0$
is suitably small, then the local solutions for $s = 3$ are continued for all times \cite{GH,GP}.
Weak bounded solutions with shock discontinuities were constructed in \cite{Coclite1,Coclite2}.
Weak solutions of the Cauchy problem (\ref{redOst}) as the limiting solution of the Cauchy problem for
the regularized Ostrovsky equation were considered in \cite{Coclite3}.

The reduced Ostrovsky equation with smooth solutions is completely integrable
as it can be reduced to the integrable Tzizeica equation by a coordinate transformation \cite{Manna}.
This property enables a construction of a bi-infinite set of conserved quantities in the time evolution \cite{Sakovich}
and the inverse scattering transform with the Riemann--Hilbert approach \cite{Shepelsky}.
Two integrable semi-discretizations of the reduced Ostrovsky equation have been obtained
by using bilinear forms \cite{Maruno}.

Stability of smooth and peaked periodic waves in the reduced Ostrovsky equation has been recently addressed
in a number of publications \cite{JP,GP17,Hakkaev1,Hakkaev2,Stefanov}. By using higher-order conserved quantities
the smooth small-amplitude periodic waves were shown in \cite{JP} to be unconstrained minimizers of a higher-order energy function.
This result holds for {\em subharmonic} perturbations, that is, perturbations whose period is  a multiple of the period of the smooth periodic waves. Since the higher-order conserved quantities
are well-defined in the space $\dot{H}^3_{\rm per}$, where global well-posedness has been proven \cite{GP},
it follows from the minimization properties that smooth small-amplitude
periodic waves are both spectrally and orbitally stable.
The minimization properties were confirmed numerically for smooth periodic waves of large amplitude all the way up to the limiting peaked wave of parabolic profile with maximal amplitude,  for which the numerical results were inconclusive \cite{JP}.

Spectral stability of smooth periodic waves with respect to {\em co-periodic} perturbations, that is,
perturbations with the same period as the period of the periodic wave, was shown in \cite{GP17}
by using the standard variational formulation of the periodic waves as critical points of energy subject to
fixed momentum. This result holds also for the generalized reduced Ostrovsky equation with power nonlinearity.
Independently, spectral stability of smooth periodic waves in the reduced
Ostrovsky equation was shown in \cite{Hakkaev2} by using a coordinate transformation of the spectral
stability problem to an eigenvalue problem studied earlier in \cite{Stefanov}.

Regarding the peaked periodic waves, some conflicting results were recently obtained.
In \cite{Hakkaev2}, the peaked wave with the parabolic profile was addressed and claimed
to be ``unstable in the absence of periodic boundary conditions".
A formal proof of this statement was obtained by constructing explicit solutions of the spectral stability problem for a positive
(unstable) eigenvalue. However, this construction violates the periodic boundary conditions on the perturbation and
hence does not provide an answer to the spectral stability question. In contrast, families of peaked  periodic waves of small amplitude,
which were previously unknown in the context of the reduced Ostrovsky equation,
were constructed in \cite{Hakkaev1} and these families were shown to be
spectrally stable with respect to co-periodic perturbations by using the same coordinate transformation
as in \cite{Stefanov}.

In this paper we give a simple and definite conclusion about existence, uniqueness and stability of peaked periodic waves in the reduced
Ostrovsky equation. This is the first time,
to the best of our knowledge, that  linear instability
of peaked periodic waves is proven by means of
semigroup theory and energy estimates.

The following theorem presents a summary of the main results of this paper.
See Definitions \ref{def-single-lobe}, \ref{def-linear-stability}
and Lemmas \ref{lemma-no}, \ref{lemma-full} for precise statements.

\begin{theorem}
\label{theorem-main}
\begin{enumerate}
\item[]
\item \emph{\bf Uniqueness:}
The peaked periodic wave $U_*$ with parabolic profile is the unique (up to spatial translations)
peaked travelling wave solution of the reduced Ostrovsky equation in $\dot L^2_{\rm per}$
having a single minimum per period. The solution is Lipschitz continuous and exists
in $\dot{H}^s_{\rm per}$ with $s < 3/2$. Moreover, the reduced Ostrovsky equation
does not admit any H\"{o}lder continuous solutions.

\item \emph{\bf Instability:}  The orbit generated by spatial translations
of the peaked periodic wave $U_*$ is \emph{linearly unstable} with respect to
perturbations in $X^1_{\rm per}$, where
\begin{equation}
\label{X-1-space}
X^1_{\rm per} := \{ v \in \dot{L}^2_{\rm per} : \; (c_* - U_*) v \in H^1_{\rm per} \}
\end{equation}
and $c_*$ is the wave speed of the periodic wave $U_*$.
\end{enumerate}
\end{theorem}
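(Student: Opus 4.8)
The plan is to treat the two assertions separately. \emph{Uniqueness.} Substitute $u(t,x)=U(x-ct)$; using that $\partial_x^{-1}u$ is the zero-mean antiderivative, the profile solves $(U-c)U''+(U')^2=U$, equivalently $\partial_\xi\big((U-c)U'\big)=U$ with $(U-c)U'=\partial_\xi^{-1}U\in\dot H^1_{\rm per}$, in particular continuous. Wherever $U\neq c$ the equation is nondegenerate and $U$ is smooth; viewing $(U')^2$ as a function of $U$ turns the equation into a linear first-order ODE with integrating factor $(U-c)^2$, yielding the first integral
\begin{equation*}
(U-c)^2(U')^2=\tfrac23U^3-cU^2+E .
\end{equation*}
A peaked single-minimum profile must be non-smooth, so it attains the value $c$ at its corner; since $\partial_\xi^{-1}U$ is continuous while $(U-c)U'$ changes sign across a point where $U=c$ unless the right-hand side above vanishes there, one is forced to have $E=\tfrac13c^3$. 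Then the cubic factors as $(U-c)^2(2U+c)/3$, so $(U')^2=(2U+c)/3$ and $U''=\tfrac13$ on each smooth arc: $U$ is a piecewise parabola, and the constraints of period $2\pi$, zero mean and a single minimum fix $U=U_*$ and $c=c_*=\pi^2/9$ (so $\|U_*'\|_{L^\infty}=\pi/3$) up to translation, with $U_*\in\dot H^s_{\rm per}$ exactly for $s<3/2$. The same sign argument forbids a H\"older but non-Lipschitz profile: at its non-smooth point $U=c$ with $U'$ unbounded, so the right-hand side is non-zero there and $(U-c)U'$ jumps, contradicting continuity of $\partial_\xi^{-1}U$.

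\emph{Instability: linearisation and semigroup.} In the frame $\xi=x-c_*t$, writing $u=U_*+v$ and linearising yields $\partial_t v=\mathcal Lv$ with $\mathcal Lv=\partial_\xi\big((c_*-U_*)v\big)+\partial_\xi^{-1}v$. The weight $c_*-U_*\geq0$ vanishes linearly precisely at the peak (with slope $\|U_*'\|_{L^\infty}$ there), so $\partial_\xi\big((c_*-U_*)v\big)\in\dot L^2_{\rm per}$ iff $(c_*-U_*)v\in H^1_{\rm per}$, i.e.\ $v\in X^1_{\rm per}$; this is the natural domain of $\mathcal L$ on $\dot L^2_{\rm per}$ and the space in which the linearised Cauchy problem has classical solutions. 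I would show $\mathcal L$ is closed and generates a $C_0$-semigroup $e^{t\mathcal L}$ on $\dot L^2_{\rm per}$: dissipativity of $\mathcal L-\tfrac{\pi}{6}$ comes from the energy identity below, and surjectivity of $\lambda-\mathcal L$ for $\Real\lambda$ large from solving the resolvent equation explicitly in $g=(c_*-U_*)v$ (its integrating factor is a power of the ratio across the peak), choosing the free constant to cancel the singular behaviour at the peak.

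\emph{Instability: sharp growth.} Pairing $\partial_t v=\mathcal Lv$ with $v$, the nonlocal term drops since $\Real\langle v,\partial_\xi^{-1}v\rangle=0$, and integration by parts — the boundary term at the peak vanishes because $c_*-U_*=0$ there and $(c_*-U_*)|v|^2\to0$ for $v\in X^1_{\rm per}$ — gives
\begin{equation*}
\frac{d}{dt}\|v\|_{L^2}^2=-\int U_*'\,|v|^2\,d\xi\le\|U_*'\|_{L^\infty}\|v\|_{L^2}^2 ,
\end{equation*}
whence $\|e^{t\mathcal L}\|_{L^2\to L^2}\le e^{t\|U_*'\|_{L^\infty}/2}=e^{\pi t/6}$. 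For the matching lower bound I would use the transport part $\mathcal L_0v=\partial_\xi\big((c_*-U_*)v\big)$: along its characteristics $\dot\xi=-(c_*-U_*(\xi))$, which flow into the peak, the quantity $(c_*-U_*)v$ is transported unchanged, so
\begin{equation*}
\|e^{t\mathcal L_0}v_0\|_{L^2}^2=\int\frac{c_*-U_*(\eta)}{c_*-U_*(\phi^t(\eta))}\,|v_0(\eta)|^2\,d\eta ,
\end{equation*}
and the weight is $\leq e^{\pi t/3}$, approaching $e^{\pi t/3}$ for $\eta$ near the peak; hence $\|e^{t\mathcal L_0}\|_{L^2\to L^2}=e^{\pi t/6}$, and since $\mathcal L_0$ has no $L^2$ eigenfunctions (its spectrum is the half-plane $\{\Real\lambda\leq\pi/6\}$) this gives $\omega_{\mathrm{ess}}(\mathcal L_0)=\pi/6$. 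As $\partial_\xi^{-1}$ is compact on $\dot L^2_{\rm per}$, a Dyson--Phillips argument shows $e^{t\mathcal L}-e^{t\mathcal L_0}$ is compact, so $\omega_{\mathrm{ess}}(\mathcal L)=\omega_{\mathrm{ess}}(\mathcal L_0)=\pi/6>0$; thus $e^{t\mathcal L}$ is not uniformly bounded, the orbit of $U_*$ is linearly unstable in $X^1_{\rm per}$, and the $L^2$ growth rate $\pi/6$ is sharp.

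\emph{Main obstacle.} Everything turns on the degeneracy of $c_*-U_*$ at the peak: it is both the engine of the instability — the characteristic flow squeezes perturbations into the peak, where $(c_*-U_*)^{-1}$ amplifies them at rate $e^{\pi t/3}$ — and the source of the technical difficulty. The two steps I expect to be most delicate are proving that $\mathcal L$ generates a $C_0$-semigroup (the resolvent/range condition near the peak), and computing $\omega_{\mathrm{ess}}(\mathcal L_0)$ rigorously from the explicit characteristic representation, including the verification that $\mathcal L_0$ has no point spectrum and that the degenerate integration by parts is legitimate.
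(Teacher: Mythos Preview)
Your uniqueness argument via the first integral is the paper's own route (their energy invariant is your cubic, the factorisation and ODE solution are identical); the paper additionally treats the case where $U$ hits $c$ at an interior point of the period, and for the H\"older exclusion it uses a direct bootstrap from the first-order relation $(c-U)U'=-\partial_\xi^{-1}U$: with $c-U\sim|\pi-z|^\alpha$ and $\partial_\xi^{-1}U\sim|\pi-z|$ (it vanishes at $\pm\pi$ by oddness), one gets $U'\sim|\pi-z|^{1-\alpha}$, hence $U'\in C^{1-\alpha}_{\rm per}$ and $U\in C^1_{\rm per}$, a contradiction. Your sign-jump version reaches the same conclusion, though the sentence ``$U'$ unbounded, so the right-hand side is non-zero'' hides a contrapositive (if the cubic vanished at $U=c$ the factorisation would force $U'$ bounded) that should be made explicit.

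The instability argument genuinely diverges from the paper at the lower bound. The paper does not use essential spectral radii or compact-perturbation invariance at all; instead it writes the full evolution by Duhamel over the transport characteristics, bounds the explicit kernel by $e^{\pi(t-t')/6}$ uniformly in $s$, combines $\|\partial_\xi^{-1}v\|_{L^\infty}\le\|v\|_{L^1}\le\sqrt{2\pi}\|v\|_{L^2}$ with the already-proved upper bound, and shows that for odd $v_0$ with $\|v_0\|_{L^1}\ll\|v_0\|_{L^2}$ the Duhamel correction is dominated by the transport growth, giving $\|v(t)\|_{L^2}\ge C\|v_0\|_{L^2}e^{\pi t/6}$ for concrete initial data. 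Your route through $\omega_{\mathrm{ess}}$ is more structural and avoids this hands-on estimate, but carries a hidden cost: the step ``no $L^2$ eigenfunctions and $\omega_0(\mathcal L_0)=\pi/6$ imply $\omega_{\mathrm{ess}}(\mathcal L_0)=\pi/6$'' is not automatic for general $C_0$-semigroups, so you must still exhibit a singular (Weyl) sequence for $e^{t\mathcal L_0}$ at norm $e^{\pi t/6}$---your characteristic formula does provide one (normalised data concentrating at the peak, weakly null). With that step filled in your argument works and buys something the paper's constructive estimate does not make explicit: the growth is genuinely essential-spectral and cannot be removed by projecting out finitely many modes.
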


Part (1) of Theorem \ref{theorem-main} allows us to prove that the
families of peaked periodic small-amplitude waves constructed in \cite{Hakkaev1} do not satisfy the reduced
Ostrovsky equation, see Remark \ref{remark-small_amp_peakons}. Our analysis relies on Fourier theory and the existence of a first integral.
Indeed, the reduced Ostrovsky equation for smooth periodic waves can be rewritten as a second-order differential equation
with a conserved quantity. Although this equivalence can not be used when dealing with peaked periodic waves,
we can still use a first-order invariant of the second-order differential equation to analyze the behavior
of the smooth parts of the peaked periodic waves together with sharp estimates of the solution at the singularity,
see Remark \ref{rem_singularity} and Lemma \ref{lemma-no}.

Part (2) of Theorem \ref{theorem-main} gives a definite conclusion on linear instability
of the peaked periodic wave with parabolic profile with respect to co-periodic perturbations.
We do not make any claims regarding the spectral stability problem related
to the peaked periodic wave, see Remark \ref{rem-spectrum}. Instead, we prove linear instability
of the peaked periodic waves by obtaining sharp bounds on the exponential growth of the $L^2$ norm of the co-periodic
perturbations in the linearized time-evolution problem in $X^1_{\rm per}$, see Lemma \ref{lemma-full}.
Note that $\dot{H}^1_{\rm per}$ is continuously embedded into $X^1_{\rm per}$ but is not equivalent to $X^1_{\rm per}$, see Remark \ref{rem-embedding}.

It is interesting to compare peaked periodic waves in the reduced Ostrovsky equation with
peaked waves in other related nonlinear dispersive equations such as the Whitham equation
and the Camassa--Holm equation. The existence of smooth periodic travelling waves in the Whitham equation
has recently been established by \cite{Bruell18,Mats1,Ehrnstrom2013a,Mats2}, where it was shown
that the family of smooth periodic waves terminates at the highest, peaked wave, similarly to what
happens for the reduced Ostrovsky equation. It was shown numerically  in \cite{Sanford2014} that
smooth periodic waves of small amplitude are stable while smooth waves of large amplitude become unstable,
even before reaching the highest wave. This is different from the reduced Ostrovsky equation,
where all smooth periodic waves are stable even for large amplitudes up to the peaked wave, see \cite{JP},
whereas the peaked periodic wave is unstable. For the Camassa-Holm equation, both the smooth periodic waves
of all amplitudes and the limiting peaked periodic wave are stable, see \cite{Lenells2004b, Lenells2005d}
and the earlier result \cite{CS} on peakons. It is an open question to understand which precise mechanisms
govern these surprisingly different stability behaviours.

The paper is organized as follows. Section \ref{sec-2} contains the proof that the peaked wave with parabolic profile is
unique up to spatial translations in the space of functions in $\dot{L}^2_{\rm per}$ with a single minimum per period.
Section \ref{sec-3} gives the proof of linear instability of the peaked
periodic wave with respect to co-periodic perturbations.

\section{Peaked periodic wave}
\label{sec-2}

The periodic travelling waves in the reduced Ostrovsky equation are given by
$$
u(x,t) = U(x-ct),
$$
where $c$ is the wave speed and $U$ is a bounded
$2\pi$-periodic wave profile with zero mean. The wave profile $U$ is to be found
from the boundary-value problem
\begin{equation}
\label{ODE}
\left\{ \begin{array}{l} \left[ c - U(z) \right] U'(z) + (\partial_z^{-1} U)(z) = 0, \quad
\mbox{\rm for every } \; z \in (-\pi,\pi) \;\; \mbox{\rm such that } \; U(z) \neq c, \\
U(-\pi) = U(\pi), \quad \int_{-\pi}^{\pi} U(z) dz = 0, \end{array} \right.
\end{equation}
where $z = x-ct$ is the travelling wave coordinate. If $U \in \dot{L}^2_{\rm per}$,
then $\partial_z^{-1} U \in \dot{H}^1_{\rm per}$. By Sobolev's embedding, it follows that
$\partial_z^{-1} U \in C_{\rm per}$ so that the anti-derivative $\partial_z^{-1} U$
with zero mean can be expressed by the pointwise formula
\begin{equation}
\label{anti-derivative}
	(\partial_z^{-1} U)(z) = \int_{0}^z U(z') dz' - \frac{1}{2\pi}\int_{-\pi}^{\pi}\int_{0}^z U(z')dz'dz, \quad z \in [-\pi,\pi].
\end{equation}
    In what follows, we assume that $U$ is at least continuous on $[-\pi,\pi]$,
that is, we assume that $U \in C_{\rm per}$. For $\alpha \in (0,1)$, let
$C^{\alpha}_{\rm per}$ be the space of $\alpha$-H\"{o}lder $2\pi$-periodic continuous functions such that
\begin{equation}
\label{Holder}
|U(x) - U(y)| \leq K |x-y|^{\alpha}, \quad \mbox{\rm for all} \; x,y \in [-\pi,\pi],
\end{equation}
for  some $K\in \R$. We will adopt the following definition of single-lobe periodic waves.

\begin{definition}
\label{def-single-lobe}
We say that $U \in C_{\rm per}$ is a single-lobe periodic wave if
there exists $z_0 \in (-\pi,\pi)$ such that $U$ is non-increasing on $[-\pi,z_0]$
and non-decreasing on $[z_0,\pi]$.
\end{definition}

\begin{remark}
\label{remark-single-lobe}
Due to the condition $U(-\pi) = U(\pi)$ and the symmetry of the equation
$$
(c - U(z)) U'(z) + \int_{0}^z U(z') dz' - \frac{1}{2\pi}\int_{-\pi}^{\pi}\int_{0}^z U(z')dz'dz = 0
$$
with respect to the reflection $z \mapsto -z$,
the single-lobe periodic waves in Definition \ref{def-single-lobe} have even profile
$U$ with $z_0 = 0$. In this case, $(\partial_z^{-1} U)(z) = \int_{0}^z U(z') dz'$ is odd.
\end{remark}

A family of smooth $2\pi$-periodic waves to the boundary-value problem (\ref{ODE})
satisfying $U(z) < c$ for every $z \in [-\pi,\pi]$ was constructed in our previous work \cite{GP17}
in an open interval of the speed parameter $c$.
By Theorem 1(a) and Lemma 3 in \cite{GP17}, we have the following result.

\begin{lemma}
\label{lemma-smooth-wave}
There exists $c_* > 1$ such that for every $c \in (1,c_*)$,
the boundary-value problem (\ref{ODE}) admits a unique smooth periodic wave
in the sense of Definition \ref{def-single-lobe} with the profile $U \in \dot{H}^{\infty}_{\rm per}$
satisfying $U(z) < c$ for every $z \in [-\pi,\pi]$.
\end{lemma}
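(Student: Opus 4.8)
The statement combines Theorem~1(a) and Lemma~3 of \cite{GP17}; here is the route I would take. The plan is to convert the nonlocal boundary-value problem (\ref{ODE}) into an autonomous, $c$-independent second-order ODE, to parametrize its single-lobe periodic orbits by a period integral, and to recover the smooth wave at a given speed $c$ by imposing that the period equal $2\pi$.

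First I would differentiate the first relation in (\ref{ODE}), using $\partial_z(\partial_z^{-1}U)=U$, to obtain the autonomous equation $(c-U)U''-(U')^2+U=0$. By Remark~\ref{remark-single-lobe} a single-lobe wave is even, so $U'(0)=0$ and $(\partial_z^{-1}U)(0)=0$; conversely, for an even $2\pi$-periodic $C^2$ solution of the autonomous equation the function $z\mapsto(c-U)U'+\partial_z^{-1}U$ has zero derivative and vanishes at $z=0$, hence is identically zero, and evaluating it at $z=\pi$, where $U'(\pi)=0$ by evenness and periodicity, forces $\int_{-\pi}^{\pi}U\,dz=0$. So the problem reduces to producing, for each admissible $c$, an even $2\pi$-periodic single-lobe solution with $U<c$ on $[-\pi,\pi]$. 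The scaling $U=cV$, $z=\sqrt{c}\,\zeta$ eliminates $c$ and leaves $(1-V)V_{\zeta\zeta}-V_\zeta^2+V=0$, so that all smooth waves come from one $c$-independent family of single-lobe periodic orbits, with the speed recovered from the $\zeta$-period $\mathcal{T}$ by $c=(2\pi/\mathcal{T})^2$.

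Next I would use the conserved quantity: multiplying the last equation by $-2(1-V)V_\zeta$ shows that $(1-V)^2V_\zeta^2-\tfrac23V^3+V^2$ is constant, say equal to $d$, so $(1-V)^2V_\zeta^2=Q(V)$ with $Q(V):=\tfrac23V^3-V^2+d$. A single-lobe even orbit runs monotonically between its minimum $v_-$ (at $\zeta=0$) and its maximum $v_+$, which are consecutive simple zeros of $Q$ with $Q>0$ in between, third zero $v_3>v_+$, and the admissibility $V<1$ amounts to $v_+<1$. Integrating $d\zeta=(1-V)\,dV/\sqrt{Q(V)}$ over the half-period from $v_-$ to $v_+$ gives $\mathcal{T}=2\int_{v_-}^{v_+}(1-V)\,dV/\sqrt{Q(V)}$, a convergent integral since $Q$ has simple zeros at the endpoints. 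Letting the amplitude vary, $v_-$ parametrizes the family; the linearization $V_{\zeta\zeta}+V=0$ at $V\equiv0$ gives $\mathcal{T}\to2\pi$ (hence $c\to1$) in the small-amplitude limit, while at the other extreme $v_+$ and $v_3$ merge into a double zero of $Q$ at $V=1$, where the equation degenerates and the orbit becomes the parabolic profile $V=\tfrac16\zeta^2-\tfrac12$ on $[-3,3]$; since there $(1-V)$ cancels the double zero, the remaining integral is elementary and gives $\mathcal{T}=6$, i.e.\ $c_*=\pi^2/9$, matching the peaked wave of Section~\ref{sec-2}. Finally, $U\in\dot{H}^{\infty}_{\rm per}$ follows by bootstrapping in the autonomous equation, whose leading coefficient $c-U$ stays positive.

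The crux, and the place where a genuine estimate rather than bookkeeping is needed, is to show that the period function $v_-\mapsto\mathcal{T}(v_-)$ is strictly monotone along the whole family, from the linear value $\mathcal{T}=2\pi$ down to the peaked value $\mathcal{T}=6$: this is a monotonicity estimate for the period integral of the weighted cubic above, and it yields both existence (by the intermediate value theorem, every $c\in(1,c_*)$ is attained) and uniqueness (every $c$ is attained once) of the smooth single-lobe wave. A subsidiary point is to verify that the single-lobe geometry, the exact period $2\pi$, and the inequality $U<c$ remain simultaneously valid along the branch all the way to the peaked limit.
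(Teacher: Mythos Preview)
The paper does not give its own proof of this lemma; it simply records the statement as a consequence of Theorem~1(a) and Lemma~3 of \cite{GP17}, so there is no in-paper argument to compare against directly. Your outline is a faithful reconstruction of the route behind those cited results: pass to the autonomous second-order equation, scale out $c$ via $U=cV$, $z=\sqrt{c}\,\zeta$, use the first integral (which is exactly the invariant (\ref{energy-ODE}) after rescaling), and parametrize the single-lobe orbits by the period integral. Your endpoint computations are correct: the linearization about $V\equiv 0$ gives $\mathcal{T}\to 2\pi$, the degenerate peaked orbit gives $\mathcal{T}=6$, and $c=(2\pi/\mathcal{T})^2$ then runs over $(1,\pi^2/9)$, recovering $c_*=\pi^2/9$.

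The one genuine gap, which you flag honestly, is the strict monotonicity of $v_-\mapsto\mathcal{T}(v_-)$ on the whole family. This is precisely what Lemma~3 of \cite{GP17} supplies, and without it you get existence (by the intermediate value theorem) but not uniqueness. So your sketch is a correct roadmap that stops exactly where the real estimate lives. One minor point of hygiene: in your converse argument deducing the zero-mean condition from the autonomous ODE, the symbol $\partial_z^{-1}U$ must mean the antiderivative $\int_0^z U$ vanishing at the origin, not the zero-mean antiderivative (\ref{anti-derivative}), since the latter presupposes what you are trying to prove; once $\int_{-\pi}^{\pi}U=0$ is established the two coincide, and the argument goes through as written.
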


\begin{remark}
For the smooth periodic waves $U \in \dot{H}^{\infty}_{\rm per}$ to the boundary-value problem
(\ref{ODE}), the periodic boundary conditions are satisfied for all derivatives of $U$.
\end{remark}

At $c = c_*$, the periodic wave with parabolic profile has been known
since the original work of Ostrovsky \cite{Ostrov}. It is easy to check
that the boundary-value problem (\ref{ODE}) is satisfied
by $U(z) = (z^2-3c)/6$, whereas the zero mean condition is satisfied if $c = c_* \coloneqq \pi^2/9$. This yields
the exact expression for the peaked periodic wave with zero mean
\begin{equation}
\label{peaked-wave}
  U_*(z) \coloneqq \frac{3 z^2 - \pi^2}{18}, \quad z \in [-\pi,\pi],
\end{equation}
periodically continued beyond $[-\pi,\pi]$. Note that $U_*(\pm\pi)= \pi^2/9 = c_*$ and $\pm U'_*(\pm\pi)=\frac{\pi}{3}$.
 The peaked periodic wave (\ref{peaked-wave}) can be represented by
the Fourier cosine series
$$
U_*(z) = \sum_{n=1}^{\infty} \frac{2 (-1)^n}{3 n^2} \cos(nz),
$$
which is well defined in $\dot{H}^s_{\rm per}$ for $s < 3/2$.


\begin{remark}
\label{rem_angle}
The enclosed angle at the peak of the wave is steeper than the maximal $120^{\circ}$ angle of the Stokes wave of greatest height,
see \cite{Stokes1847a, Toland1978a}. Indeed, for peaked periodic waves of the reduced Ostrovsky equation the enclosed angle
is $\varphi=\pi-2\arctan (\pi/3)$, whereas for the Stokes wave it is $\varphi=\pi-2\pi/3$.
\end{remark}

\begin{remark}
\label{rem_singularity}
The peaked periodic wave (\ref{peaked-wave}) belongs to solutions of the boundary-value problem (\ref{ODE})
with profile $U_* \in \dot{H}^1_{\rm per}$ satisfying $U_*(z) < c$ for every $z \in (-\pi,\pi)$
and $U_*(\pm \pi) = c$. The first derivative of
$U_* \in \dot{H}^1_{\rm per}$ has a finite jump singularity across the end points $z = \pm \pi$.
More precisely, the profile $U_*$ is Lipschitz continuous at  $\pm\pi$, that is, there exist
constants $0<c_1<c_2$ such that
$$
c_1 |z-\pi| \leq |U_*(z)-c_*| \leq c_2 |z-\pi| \quad \mbox{\rm for} \;\; |z-\pi| \ll 1,
$$
which can be easily checked in view of the explicit expression \eqref{peaked-wave}.
\end{remark}

The next result states that the only single-lobe periodic wave with
profile $U \in C_{\rm per}$
satisfying the boundary-value problem (\ref{ODE}) and having a
singularity in the derivative at $z = \pm \pi$
is the peaked periodic wave $U_*$ given in (\ref{peaked-wave}).

\begin{lemma}
\label{lemma-no}
For every $c\in\R$, the boundary-value problem (\ref{ODE}) does not admit single-lobe periodic waves
in the sense of Definition \ref{def-single-lobe} which are $C^{\alpha}_{\rm per}$ with $\alpha \in [0,1)$.
The only periodic wave with a singularity in the derivative at $z = \pm \pi$ is the peaked wave
with parabolic profile (\ref{peaked-wave}), which exists for $c = c_* = \pi^2/9$ and is Lipschitz at the peak.
\end{lemma}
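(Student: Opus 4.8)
The plan is to reduce to an even profile, derive a cubic first integral of the second-order form of \eqref{ODE}, and combine a sign argument based on convexity with the zero-mean condition to identify the wave. By Remark \ref{remark-single-lobe} it suffices to treat an even single-lobe profile $U \in C_{\rm per}$ which is not identically zero, with minimum $\mu := U(0)$ at $z = 0$ and maximum $M := U(\pi)$, and to work on $[0,\pi]$, where $U$ is non-decreasing. Following Remark \ref{remark-single-lobe}, set $W := \partial_z^{-1} U$, so that $W(z) = \int_0^z U(z')\,dz'$ is odd, of class $C^1_{\rm per}$ with $W' = U$, $W(0) = 0$, and $W(\pi) = \int_0^\pi U\,dz' = 0$ by evenness and zero mean. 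Since $W' = U$ is non-decreasing on $[0,\pi]$, the function $W$ is convex on $[0,\pi]$ with $W(0) = W(\pi) = 0$; as $U \not\equiv 0$ forces $W \not\equiv 0$, this gives $W < 0$ on $(0,\pi)$. I would also record that $U$ is $C^\infty$ on the open set $\{z : U(z) \ne c\}$, obtained by bootstrapping $U' = -W/(c-U)$ there; hence any derivative singularity of $U$ can occur only where $U = c$.

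The first step is to prove $M \le c$. If $M > c$, then $U > c$ on a left neighbourhood of $\pi$ contained in $(0,\pi)$; there $c - U < 0$ and $U' \ge 0$, so the equation gives $-W = (c-U)U' \le 0$, that is, $W \ge 0$, contradicting $W < 0$ on $(0,\pi)$. If $M < c$, then $U < c$ everywhere, so $U \in C^\infty(\mathbb{R})$ by the bootstrapping above and $U$ has no derivative singularity. A singularity at $z = 0$ would require $\mu = U(0) = c$, hence $\mu = M = c$, so $U \equiv c$, and then $c = 0$ and $U \equiv 0$ by zero mean, which is excluded. Therefore a single-lobe wave with a derivative singularity must satisfy $M = c$, and then $c > 0$ (otherwise $U \le c \le 0$ and $\int_{-\pi}^{\pi} U = 0$ give $U \equiv 0$); moreover $U < c$ on $[0,\pi)$, a plateau $U \equiv c$ on a subinterval of $(0,\pi)$ being impossible because there the equation reduces to $W = 0$, against $W < 0$. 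Consequently the singularity can sit only at $z = \pm\pi$.

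Now I would extract the first integral. On $(0,\pi)$, where $U < c$, the identity $W = -(c-U)U'$ gives
$$\frac{d}{dz}\bigl(W^2\bigr) = 2WU = -2(c-U)U'U = \frac{d}{dz}\Bigl(\tfrac23 U^3 - cU^2\Bigr),$$
so integrating from $z = 0$, where $W = 0$ and $U = \mu$, yields (also at $z = \pi$ by continuity)
$$W(z)^2 = \tfrac23 U(z)^3 - c\,U(z)^2 + c\mu^2 - \tfrac23 \mu^3, \qquad z \in [0,\pi].$$
Evaluating at $z = \pi$, where $W = 0$ and $U = c$, gives $-\tfrac13(\mu - c)^2(2\mu + c) = 0$, hence $\mu = -c/2$, the root $\mu = c$ corresponding to the excluded trivial case. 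Substituting back, $W^2 = \tfrac23(c-U)^2(U + c/2)$; combined with $W^2 = (c-U)^2(U')^2$ from \eqref{ODE} and $c - U > 0$ on $(0,\pi)$, this reduces to the separable equation $(U')^2 = \tfrac23(U + c/2)$. Integrating with $U(0) = -c/2$ gives $U(z) = z^2/6 - c/2$ on $[0,\pi]$, and imposing $U(\pi) = c$ forces $c = \pi^2/9 = c_*$, so $U = U_*$ of \eqref{peaked-wave}. Since $U_*'(z) = z/3$ is bounded on $[-\pi,\pi]$ and has only a finite jump across $z = \pm\pi$, the wave $U_*$ is Lipschitz but not $C^1$. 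Hence every single-lobe wave is either $C^\infty$ or equals $U_*$; in particular none with a derivative singularity is merely $C^\alpha_{\rm per}$ for some $\alpha \in [0,1)$, and $U_*$ is the unique single-lobe wave with a singularity, located at $z = \pm\pi$ and existing only for $c = c_*$. This proves both assertions.

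I expect the main obstacle to be the case analysis around the level set $\{U = c\}$ in the second paragraph---establishing $M \le c$, excluding interior singularities, plateaus, and a singularity at $z = 0$, and thereby confining the singularity to $z = \pm\pi$---which rests on the convexity of $W$ and the consequent strict inequality $W < 0$ on $(0,\pi)$. Once this structure is secured, deriving the cubic first integral and solving the separable equation to obtain $\mu = -c/2$, $c = c_* = \pi^2/9$, and $U = U_*$ is a routine computation.
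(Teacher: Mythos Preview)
Your proof is correct and arrives at the same conclusion, but the route is genuinely different from the paper's.

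The paper proceeds by a regularity bootstrap: assuming the singularity sits at $z=\pm\pi$ and $U\in C^\alpha_{\rm per}$ with $\alpha\in(0,1)$, it reads off $c-U\sim(\pi-z)^\alpha$ and $\partial_z^{-1}U\sim(\pi-z)$, so $U'\sim(\pi-z)^{1-\alpha}$, forcing $U\in C^1_{\rm per}$, a contradiction. It then treats the case of an interior contact point $U(z_1)=c$ by a separate sign/phase-plane discussion, and finally uses the first integral $E$ in the form \eqref{energy-ODE} to pin down the parabolic profile. Your argument replaces the bootstrap and the interior case analysis by a single structural observation: since $W'=U$ is non-decreasing on $[0,\pi]$, $W$ is convex with $W(0)=W(\pi)=0$, hence $W<0$ on $(0,\pi)$. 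From this you get $M\le c$ by a one-line sign argument, confine any singularity to $z=\pm\pi$, and then run the same first-integral computation (your identity $W^2=\tfrac23 U^3-cU^2+{\rm const}$ is exactly the invariant \eqref{energy-ODE} rewritten via $W=-(c-U)U'$). The upshot is a clean dichotomy ``smooth or $U_*$'', from which the $C^\alpha$ non-existence follows without ever invoking H\"older exponents. What your approach buys is economy and a unified treatment of interior and boundary contact; what the paper's bootstrap buys is a direct, self-contained refutation of the $C^\alpha$ scenario that does not rely on first classifying all waves.

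One small point worth tightening: your exclusion of a plateau $U\equiv c$ on $[z_1,\pi]$ appeals to the equation ``reducing to $W=0$'' there, but \eqref{ODE} as stated only imposes the equation where $U\neq c$. The cleanest fix is to note that \eqref{ODE} is equivalent to $\tfrac{d}{dz}\bigl[\tfrac12(c-U)^2\bigr]=W$, which, for continuous $U$ and $W$, extends across $\{U=c\}$; on a plateau the left side vanishes identically, forcing $W=0$ and giving the desired contradiction with $W<0$. This is implicit in the paper's treatment as well, so it is a formality rather than a gap.
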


\begin{proof}
Let $U\in \dot L^2_{\rm per}\cap C_{\rm per}$ be a single-lobe periodic wave solution of
the boundary-value problem (\ref{ODE}). By Remark \ref{remark-single-lobe},
$U \in \dot{L}^2_{\rm per}$ is even, $\partial_z^{-1} U \in \dot{H}^1_{\rm per}$ is odd, and
$\partial_z^{-1} U$ is represented by the Fourier sine series
which converges absolutely and uniformly, so that $(\partial_z^{-1} U)(\pm \pi) = 0$.\medskip

$\bullet$ Let us first consider the case where $U(z) \neq c$ for every $z \in (-\pi,\pi)$ and  $U(\pm \pi) = c$.
Let  $\alpha \in (0,1)$.  We assume to the contrary that there exists a solution $U$ of
 the boundary-value problem (\ref{ODE}) with  $U \in C^{\alpha}_{\rm per}$. If $U \in C^{\alpha}_{\rm per}$ with $\alpha \in (0,1)$, then
$\partial_z^{-1} U \in C_{\rm per}^{1}$. Since $U(\pm \pi) = c$ and $(\partial_z^{-1} U)(\pm \pi)=0$  we find that $c - U(z) \sim (\pi - z)^{\alpha}$ and $(\partial_z^{-1} U)(z) \sim (\pi - z)$ at $z=\pm \pi$. Since $U$ satisfies the boundary value problem \eqref{ODE} we have that
\begin{equation}
U'(z) = -\frac{(\partial_z^{-1} U)(z)}{c-U(z)}, \quad z \in (-\pi,\pi)
\label{contradiction-ODE}
\end{equation}
which yields  $U'(z)\sim (\pi-z)^{1-\alpha}$ at $z=\pm \pi$. Equation (\ref{contradiction-ODE})
also implies that $U' \in C^1(-\pi,\pi)$ so we find  that $U' \in C_{\rm per}^{1-\alpha}$.
Since $1- \alpha \in (0,1)$ we conclude that $U \in C_{\rm per}^1$ in contradiction to
the assumption that $U \in C^{\alpha}_{\rm per}$ with $\alpha \in (0,1)$. The case $\alpha=0$,
which refers to solutions $U \in C_{\rm per}$ in view of Definition \ref{def-single-lobe},
can be proven in exactly the same way.

We now show that the only peaked periodic solution with peak at $U(\pm \pi) = c$ is the solution with the parabolic profile (\ref{peaked-wave}).
Since $U(z) \neq c$ for every $z \in (-\pi,\pi)$ and $U \in C^1(-\pi,\pi)$,
the first-order invariant
\begin{eqnarray}
\nonumber
E & = & \frac{1}{2} \left[ c - U(z) \right]^2 \left[ U'(z) \right]^2 + \frac{c}{2} U(z)^2 - \frac{1}{3} U(z)^3 \\
& = & \frac{1}{2} \left[(\partial_z^{-1} U)(z) \right]^2 + \frac{c}{2} U(z)^2 - \frac{1}{3} U(z)^3,
\label{energy-ODE}
\end{eqnarray}
holds for $z\in (-\pi,\pi)$.
Since $(\partial_z^{-1} U)(z)$ is continuous in $z=\pm\pi$ with $(\partial_z^{-1} U)(\pm \pi) = 0$,
$E$ is continuous and constant up to the boundary at $z=\pm \pi$ and we have $E |_{z = \pm\pi} = c^3/6=:E_c $.
For $c>0$, the level with $E = E_c$ (see the bold curve in Figure~\ref{fig-phase-plane}) gives rise to a  peaked periodic wave solution with parabolic profile $U(z)<c$. We claim that this peaked wave is exactly the solution \eqref{peaked-wave} with speed $c=c^*$.
Indeed, from the level set with $E=E_c$ we have that
$$
\tfrac{1}{2} (c-U)^2 (U'(z))^2=\tfrac{c^3}{6} - \tfrac{c}{2} U^2 + \tfrac{1}{3} U^3  = \tfrac{1}{6} (c-U)^2(c+2U)
$$
and hence
$$
U'(z) = \tfrac{1}{\sqrt{3}}\sqrt{c+2U}>0 \quad \mbox{\rm for} \;\; z \in (0,\pi)
$$
subject to the boundary condition $U'(0)=0$. By separation of variables we can solve this equation uniquely
to find that $U(z) =\tfrac{1}{6}(z^2 -3c)$. In view of the condition $U(\pi)=c$, this implies that
$c=\frac{\pi^2}{9}=c^*$ which proves the claim.
\medskip

\begin{figure}
\center
\includegraphics[scale=0.45]{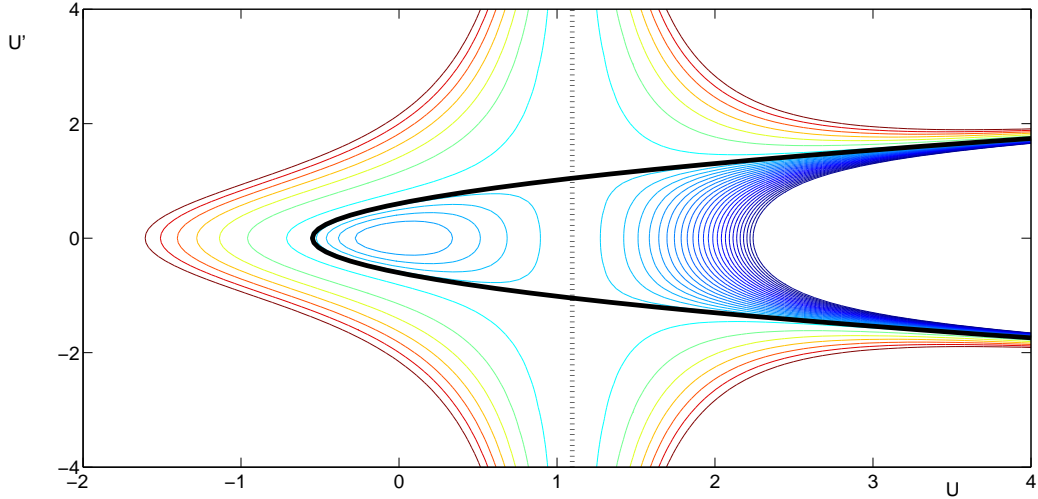}
\caption{Phase plane portrait obtained from level curves of the first-order invariant (\ref{energy-ODE}) for some $c > 0$.
The dashed black line indicates the
singularity line $U = c$.
The solid black curve to the left of the singular line corresponds to the parabolic profile (\ref{peaked-wave}). }
\label{fig-phase-plane}
\end{figure}

$\bullet$  Let us now analyze the situation where there exists $z_1 \in (0,\pi)$
such that $U(\pm z_1) = c$ and equation (\ref{contradiction-ODE}) holds separately for
$ z \in (0,z_1)$ and for $z\in(z_1,\pi)$. Let us assume that $U(z) < c$ for $z \in (0,z_1)$. If $U(z) > c$ for $z \in (0,z_1)$, the proof is analogous. There are two possibilities, either $(\partial_z^{-1} U)(\pm z_1) = 0$ or $(\partial_z^{-1} U)(\pm z_1) \neq 0$.\smallskip

If $(\partial_z^{-1} U)(\pm z_1) = 0$, then by the same argument as above the first-order invariant $E$
is continuous and constant on $[-z_1,z_1]$ with $E |_{z = \pm z_1} = E_{z_1}$.

For $z \in (z_1,\pi]$  the solution corresponding to the level set $E = E_{z_1}$  can either be continued uniquely from the region with $U(z)<c$  into the region with $U(z) > c$ , or $z=z_1$ represents a turning point (a local maximum for $U$) and the solution can be continued uniquely into the region with $U(z) < c$ for $z \gtrsim z_1$\footnote{The notation $z\gtrsim z_1$ means that $0<z-z_1<\varepsilon$ for some small $\varepsilon >0$, and equivalently for the reverse inequality. }. However, $U(z)>0$ for  $c>0$  and $U(z)<0$ for $c<0$ in the interval $z\in(z_1,\pi]$. Therefore, the first variant implies that $(\partial_z^{-1} U)(z) =  \int_0^z U(z')dz' \neq 0$ for every $z \in (z_1,\pi]$ which contradicts $(\partial_z^{-1} U)(\pi) =0$. The second continuation is possible
but does not belong to the class of single-lobe periodic waves, see Remark \ref{remark-non-uniqueness}.\smallskip

If $(\partial_z^{-1} U)(\pm z_1) \neq 0$, then the contradiction arises from the fact
that, since  $(\partial_z^{-1}U)(z)$ is continuous at $z_1$ and equation (\ref{contradiction-ODE})
holds separately for $ z \in (0,z_1)$ and for $z\in(z_1,\pi)$,  the change of the sign of $U'(z)$ across $z_1$ is determined
by the change of the sign of $c - U(z)$ across $z_1$.  Indeed, if $U(z) < c$ both for $z \lesssim z_1$ and $z \gtrsim z_1$,
then it follows from (\ref{contradiction-ODE}) that the sign of $U'(z)$ remains the same for  $ z \in (0,z_1)$ and $z\in(z_1,\pi)$.  But
this is impossible since $U'(z)$ must change sign for $z \lesssim z_1$ and $z \gtrsim z_1$
if $U(z) < c$ on both sides of $z_1$. If on the other hand $U(z) < c$ for $z \lesssim z_1$ and $U(z) > c$ for $z \gtrsim z_1$,
then it follows again from (\ref{contradiction-ODE}) that the sign of $U'(z)$ changes, in contradiction with
the monotone increase of $U(z)$  for all  $z \in (0,\pi)$.
Hence, both possibilities with $(\partial_z^{-1} U)(\pm z_1) \neq 0$  yield a contradiction.
\medskip

Combing all these arguments we find that the only single-lobe peaked periodic wave has
parabolic profile (\ref{peaked-wave}), which is Lipschitz at the peak  $U(\pm \pi) = c$.
\end{proof}

\begin{remark}
There is a simple way to obtain other peaked periodic waves in the boundary-value problem (\ref{ODE}).
One can flip the periodic wave with parabolic profile at
a point $z_0 \in (0,\pi)$ and pack two such waves over one period. This possibility is allowed in the proof
of Lemma \ref{lemma-no}, but not in the class of single-lobe periodic waves.
Similarly, one can pack three and more periods of the peaked wave with parabolic
profiles. Definition \ref{def-single-lobe} eliminates this type of non-uniqueness of the peaked
periodic wave in the boundary-value problem (\ref{ODE}).
\label{remark-non-uniqueness}
\end{remark}

\begin{remark}
\label{remark-small_amp_peakons}
With the following formal transformation
\begin{equation}
\label{coor-transf}
U(z) = {\bf u}(\zeta),\quad    z = \int_0^{\zeta} (c - {\bf u}(\zeta')) d \zeta',
\end{equation}
smooth periodic waves with profile $U$ satisfying the quasilinear second-order equation\footnote{The quasi-linear
equation (\ref{ode-1}) is a derivative of the first equation in the boundary-value problem (\ref{ODE})
in $z$, which is justified for smooth periodic waves in Lemma \ref{lemma-smooth-wave}.}
\begin{equation}
\label{ode-1}
\frac{d}{dz} \left[ c - U(z) \right] \frac{dU}{dz} + U = 0
\end{equation}
are related to smooth periodic waves with profile ${\bf u}$ satisfying the semi-linear second-order equation
\begin{equation}
\label{ode-2}
\frac{d^2 {\bf u}}{d \zeta^2} + (c - {\bf u}) {\bf u} = 0.
\end{equation}
Although all periodic solutions of (\ref{ode-2}) are smooth, the coordinate transformation
(\ref{coor-transf}) fails to be invertible if ${\bf u}(\zeta) = c$ for some $\zeta$. Such points
generate singularities in the periodic solutions of the quasi-linear equation (\ref{ode-1}) since
$$
U'(z) = \frac{{\bf u}'(\zeta)}{c - {\bf u}(\zeta)}.
$$
In \cite{Hakkaev1}, small-amplitude peaked periodic
waves of (\ref{ode-1}) with $c < 0$ were constructed from small-amplitude smooth periodic waves
of (\ref{ode-2}) with a coordinate transformation similar to (\ref{coor-transf}).
The corresponding profile $U$ for such peaked periodic waves has a square root singularity of the form
\begin{equation}
\label{wrong-wave}
    U(z)=  c + \O(\sqrt{\pi^2-z^2})   \quad \text{ as  } \quad z\to \pm \pi.
\end{equation}
Our analysis in the proof of Lemma \ref{lemma-no} shows\footnote{Solutions of \cite{Hakkaev1} have
nonzero mean value, hence Lemma \ref{lemma-no} does not apply directly. However, the arguments in the proof lead
to the same conclusion also for solutions with nonzero mean. Indeed, if $U$ has a non-zero mean,
$\partial_z^{-1} U(z)$ may not be zero at $z = \pm \pi$. However, if we translate  the solution by half a period
so that the singularity is placed at $z=0$, then $\partial_z^{-1} U(0) =0$ by oddness of $\partial_z^{-1} U$ and
we can use the same contradiction as the one obtained from \eqref{contradiction-ODE}.}
that such solutions cannot exist, since the expansion (\ref{wrong-wave}) implies that $U \in C^{1/2}_{\rm per}$.
We conclude that the small-amplitude peaked periodic waves constructed in \cite{Hakkaev1} are artefacts of
the construction method and do not satisfy the boundary-value problem (\ref{ODE}).
\end{remark}

\begin{remark}
\label{rem-loop}
In  \cite{Step} several different types of travelling wave solutions to the reduced Ostrovsky equation
were constructed by means of phase plane analysis. Three types of solitary waves (see Fig.~9 in~\cite{Step}) were found for $c<0$.
One of them is a loop soliton, given by a multi-valued function, which  is studied in many publications \cite{loop,vakh1,vakh2}. The other two solutions have points of infinite slope (cusps), either at the maximum or at the inflection points.
The cusped solitary waves were also constructed in \cite{Stefanov} by using the transformation (\ref{coor-transf}).
By using similar arguments as in the proof of Lemma \ref{lemma-no}, the existence of the cusped waves as weak solutions
to the reduced Ostrovsky equation can be disproved.
\end{remark}

\section{Linear instability of the peaked periodic wave}
\label{sec-3}

We add a {\em co-periodic} perturbation $v$ to the travelling wave $U$, that is,
a perturbation with the same period $2\pi$. Truncating the quadratic terms
and moving with the reference frame of the travelling wave yields
the linearized evolution problem in the form
\begin{equation}
\label{linOst}
\left\{ \begin{array}{l} v_t + \partial_z \left[ (U(z)-c) v \right]  = \partial_z^{-1} v, \quad t > 0,\\
v|_{t=0} = v_0.
\end{array} \right.
\end{equation}
The linearized evolution equation can be formulated in the form
$v_t = \partial_z L v$ defined by the self-adjoint operator
\begin{equation}
\label{operator-L}
L = P_0 \left( \partial_z^{-2} + c - U(z) \right) P_0 : \; \dot{L}^2_{\rm per} \to \dot{L}^2_{\rm per},
\end{equation}
where $P_0 : L^2_{\rm per} \to \dot{L}^2_{\rm per}$ is the projection operator
that removes the mean value of $2\pi$-periodic functions. The form
$v_t = \partial_z L v$ is related to the formulation of the reduced Ostrovsky equation
in the travelling wave coordinate $z = x-ct$ as a Hamiltonian system defined by
the symplectic operator $\partial_z$ and the conserved energy function
$H_c(u) = H(u) + c Q(u)$, where
\begin{equation}
\label{energy-nonlinear}
H(u) = \int_{-\pi}^{\pi} \left[ -(\partial_z^{-1} u)^2 - \frac{1}{3} u^3 \right] dz, \quad
Q(u) = \int_{-\pi}^{\pi} u^2 dz
\end{equation}
are the conserved energy and momentum functionals for the reduced Ostrovsky equation (\ref{redOst}).
The periodic wave $u = U$ is a critical point of $H_c(u)$ and the self-adjoint operator $L$ is the Hessian operator
of the energy function $H_c(u)$ at the periodic wave $u = U$.

Thanks to the translational invariance
of the boundary-value problem (\ref{ODE}), $L \partial_z U = 0$, where
$\partial_z U \in \dot{L}^2_{\rm per}$, holds for both the smooth periodic waves
of Lemma \ref{lemma-smooth-wave} and the peaked periodic wave (\ref{peaked-wave})
in Lemma \ref{lemma-no}.
Associated to the translational
eigenvector is the symplectic orthogonality constraint
$\langle U, v \rangle = 0$. This constraint is used to study
both the evolution of the Cauchy problem (\ref{linOst}) and the spectrum of the linearized operator
\begin{equation}
\label{eq-Lz}
\partial_z L : X^1_{\rm per} \subset \dot{L}^2_{\rm per} \to \dot{L}^2_{\rm per},
\end{equation}
where $X^1_{\rm per} = \{ v \in \dot{L}^2_{\rm per} : \; (c - U) v \in H^1_{\rm per} \}$ is the maximal domain
of $\partial_z L$. See \cite{Bronski,Haragus,Pel}.

\begin{remark}
\label{rem-embedding}
For smooth periodic waves we have $c - U(z) > 0$ for every $z \in [-\pi,\pi]$ so that
$X^1_{\rm per} \equiv \dot{H}^1_{\rm per}$. For the peaked periodic wave $U_*$ with speed $c_*$, the space
$\dot{H}^1_{\rm per}$ is continuously embedded into $X^1_{\rm per}$ since $U_*$ is bounded, but
$\dot{H}^1_{\rm per}$ is not equivalent to $X^1_{\rm per}$.  Indeed, if a perturbation $v$ to $U_*$
is piecewise $C^1_{\rm per}$ with a finite jump-discontinuity at $z = \pm \pi$, then
$v \notin \dot H^1_{per}$ but $v\in X^1_{\rm per}$ since $(c_* - U_*) v\in\dot H^1_{\rm per}$
in view of the fact that $U_*(\pm \pi) =c_*$.
\end{remark}

In what follows, $\langle \cdot , \cdot \rangle$ and $\| v \|_{L^2_{\rm per}}$ denote the inner product
and the $L^2$ norm with integration over $ [-\pi,\pi]$, respectively. In the case of the
peaked periodic wave $U_*$ with the speed $c_*$, we equip $X^1_{\rm per}$ with the norm
\begin{equation}
\label{norm-X-1}
\| v \|_{X^1_{\rm per}} := \| v \|_{L^2_{\rm per}} + \| \partial_z\, [(c_* - U_*) v\,] \|_{L^2_{\rm per}}.
\end{equation}
We distinguish two concepts of stability of $2 \pi$-periodic waves with respect to linearization.

\begin{definition}
\label{def-spectral-stability}
The travelling wave $U$ is said to be spectrally stable
if $\sigma(\partial_z L) \subset i \mathbb{R}$ in $\dot{L}^2_{\rm per}$.
Otherwise, it is said to be spectrally unstable.
\end{definition}

\begin{definition}
\label{def-linear-stability}
The travelling wave $U$ is said to be linearly stable
if for every $v_0 \in X^1_{\rm per}$ satisfying
$\langle U, v_0 \rangle = 0$, there exists $C > 0$ and
a unique global solution $v \in C(\mathbb{R},X^1_{\rm per})$
to the Cauchy problem (\ref{linOst}) such that
\begin{equation}
\label{bounds-semigroup}
\| v(t) \|_{X^1_{\rm per}} \leq C \| v_0 \|_{X^1_{\rm per}}, \quad t > 0.
\end{equation}
Otherwise, it is said to be linearly unstable.
\end{definition}

In \cite{GP17}, we have proved that the smooth periodic waves of Lemma \ref{lemma-smooth-wave}
are spectrally stable in the sense of Definition \ref{def-spectral-stability}.
Here we intend to show that the peaked periodic wave $U_*$ of Lemma \ref{lemma-no} given in  (\ref{peaked-wave}) is linearly unstable in the sense of Definition \ref{def-linear-stability}.
The linear instability is due to the sharp exponential growth
of the unique global solution to the Cauchy problem (\ref{linOst}) with $U = U_*$ :
\begin{equation}
\label{growth-semigroup}
C \| v_0 \|_{L^2_{\rm per}} e^{\pi t/6} \leq \| v(t) \|_{L^2_{\rm per}}
\leq \| v_0 \|_{L^2_{\rm per}} e^{\pi t/6}, \quad t > 0,
\end{equation}
for some $C \in (0,1)$. We will obtain these bounds in two steps.
In the first step,  carried out in Section \ref{section-trunc}, we apply
the method of characteristics to the truncated linearized equation (\ref{linOst})
without the dispersive term $\partial_z^{-1} v$ and obtain the sharp
bounds (\ref{growth-semigroup}) for all initial conditions $v_0 \in X^1_{\rm per}$
satisfying the constraint
\begin{equation}
\label{constraint-1}
\int_{-\pi}^{\pi} z v_0(z)^2 dz = 0.
\end{equation}
In the second step, carried out in Section \ref{section-full},
we will show that the bounds (\ref{growth-semigroup}) remain true in the full linearized
equation (\ref{linOst}) for a subset of initial conditions $v_0 \in X^1_{\rm per}$
satisfying the constraint (\ref{constraint-1}) and the additional constraint
\begin{equation}
\label{constraint-2}
\int_{-\pi}^{\pi} z^2 v_0(z) dz = 0,
\end{equation}
which arises due to the orthogonality condition $\langle U, v \rangle = 0$ in Definition \ref{def-linear-stability}
and the zero-mean condition on $v_0$.
Regarding spectral stability or instability of the peaked periodic wave (\ref{peaked-wave}),
we will show in Section \ref{section-L} that $\sigma(L)$ in $\dot{L}^2_{\rm per}$ is given by
a continuous spectrum on $[0,\pi^2/6]$, which includes
the embedded eigenvalue $\lambda_0 = 0$ with the eigenvector $\partial_z U$,
and a simple negative eigenvalue $\lambda_1 < 0$. As a result, no spectral gap
appears between $\lambda_0 = 0$ and the continuous spectrum, hence it is impossible
to solve the spectral stability problem by applying the standard methods from \cite{Bronski,Haragus,Pel}.

\subsection{Linear instability of truncated evolution}
\label{section-trunc}

For the peaked periodic wave (\ref{peaked-wave}), we obtain the simple
expression
\begin{equation}
\label{explicit-U-c}
U_*(z) - c_* = \frac{1}{6} (z^2-\pi^2), \quad z \in [-\pi,\pi].
\end{equation}
By removing the term $\partial_z^{-1} v$ from the linearized evolution problem (\ref{linOst}) and using the explicit expression (\ref{explicit-U-c}), we can write the truncated evolution problem
in the form
\begin{equation}
\label{truncOst}
\left\{ \begin{array}{l} v_t + \frac{1}{6} \partial_z \left[ (z^2 - \pi^2) v \right]  = 0, \quad t > 0,\\
v|_{t=0} = v_0, \end{array} \right.
\end{equation}
where the initial data $v_0$ is taken in $X^1_{\rm per}$. The evolution problem
can be solved by the method of characteristics along the family of characteristic
curves $z = Z(s,t)$, where $s \in [-\pi,\pi]$ is a parameter for the initial data
and $t \geq 0$ is the evolution time. Defining
\begin{equation}
\label{curves}
\left\{ \begin{array}{l}
\frac{d}{dt} Z(s,t) = \frac{1}{6} \left[ Z(s,t)^2 - \pi^2 \right], \quad t > 0,\\
Z(s,0) = s, \end{array} \right.
\end{equation}
and setting $V(s,t) := v(Z(s,t),t)$ yields the evolution problem
in the form
\begin{equation}
\label{evolution}
\left\{ \begin{array}{l}
\frac{d}{dt} V(s,t) = -\frac{1}{3} Z(s,t) V(s,t), \quad t > 0,\\
V(s,0) = v_0(s). \end{array} \right.
\end{equation}

The family of characteristic curves is obtained by integrating the differential
equation (\ref{curves}) with the parameter $s \in [-\pi,\pi]$. Because $Z = \pm \pi$
are critical points of the differential equation (\ref{curves}), the family of characteristic curves
remain inside the invariant region $[-\pi,\pi]$ for every $t \geq 0$. The family
of characteristic curves can be obtained in the explicit form
\begin{equation}
\label{curves-explicit}
Z(s,t) = \pi \frac{s \cosh(\pi t/6) - \pi \sinh(\pi t/6)}{\pi \cosh(\pi t/6) - s \sinh(\pi t/6)}, \quad s \in [-\pi,\pi], \;\; t \in \mathbb{R}.
\end{equation}
For later use of the chain rule we compute
\begin{equation}
\label{curves-derivative}
e^{\frac{1}{3} \int_0^t Z(s,t') dt'} = \frac{\partial}{\partial s} Z(s,t) =
\frac{\pi^2}{[\pi \cosh(\pi t/6) - s \sinh(\pi t/6)]^2}, \quad s \in [-\pi,\pi], \;\; t \in \mathbb{R}.
\end{equation}

The explicit solution for $V$ in characteristic variables is obtained by integrating the differential
equation (\ref{evolution}) with respect to the parameter $s \in [-\pi,\pi]$:
\begin{equation*}
V(s,t) = v_0(s) e^{-\frac{1}{3} \int_0^t Z(s,t') dt'}.
\end{equation*}
In view of (\ref{curves-derivative}), the explicit solution is given by
\begin{equation}
\label{evolution-explicit}
V(s,t) = \frac{1}{\pi^2} [\pi \cosh(\pi t/6) - s \sinh(\pi t/6)]^2 v_0(s), \quad
s \in [-\pi,\pi], \;\; t \in \mathbb{R}.
\end{equation}

\begin{remark}
\label{rem-invariance}
Since $Z(\pm \pi,t) = \pm \pi$ for every $t \in \mathbb{R}$, we have $V(\pm \pi,t) = e^{\mp \pi t/3} v(\pm \pi,t)$,
hence $V(-\pi,t) = V(\pi,t)$ for $t \neq 0$
if and only if $v_0(\pm \pi) = 0$, in which case $V(\pm \pi,t) = 0$ for every $t \in \mathbb{R}$.
Therefore, $V(\cdot,t) \notin \dot{H}^1_{\rm per}$ for $t \neq 0$ if
$v_0 \in \dot{H}^1_{\rm per}$ with $v_0(\pm \pi) \neq 0$.

\end{remark}

By using the explicit solutions (\ref{curves-explicit}) and (\ref{evolution-explicit}), we
are able to state and prove the following linear instability result for the truncated evolution problem
(\ref{truncOst}).

 \begin{lemma}
\label{lemma-truncated}
For every $v_0 \in X^1_{\rm per}$, there exists a unique global
solution $v \in C(\mathbb{R},X^1_{\rm per})$
to the Cauchy problem (\ref{truncOst}) satisfying the upper bound
\begin{equation}
\label{growth-truncated-upper}
\| v(t) \|_{L^2_{\rm per}} \leq \| v_0 \|_{L^2_{\rm per}} e^{\pi t/6}, \quad t > 0.
\end{equation}
If $\int_{-\pi}^{\pi} s v_0(s)^2 ds = 0$, then the global solution satisfies the lower bound
\begin{equation}
\label{growth-truncated}
\frac{1}{2} \| v_0 \|_{L^2_{\rm per}} e^{\pi t/6} \leq \| v(t) \|_{L^2_{\rm per}}, \quad t > 0.
\end{equation}
\end{lemma}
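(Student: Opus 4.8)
The plan is to read off both bounds of the lemma directly from the explicit characteristic representation \eqref{curves-explicit}--\eqref{evolution-explicit} already derived above, and to obtain global well-posedness in $X^1_{\rm per}$ by observing that the weighted perturbation $(c_* - U_*) v$ is simply transported along the characteristic curves. Concretely, set $w_0 := (c_* - U_*) v_0$; by definition of $X^1_{\rm per}$ we have $w_0 \in H^1_{\rm per} \hookrightarrow C_{\rm per}$, and since $v_0 \in L^2_{\rm per}$ while $c_* - U_* = \tfrac16(\pi^2 - z^2)$ vanishes to first order at $z = \pm\pi$, integrability of $v_0 = w_0/(c_*-U_*)$ near the endpoints forces $w_0(\pm\pi) = 0$. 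Multiplying \eqref{truncOst} by the time-independent weight $\tfrac16(\pi^2 - z^2)$ shows that $w := (c_* - U_*) v$ satisfies the homogeneous transport equation $w_t + \tfrac16 (z^2 - \pi^2) \partial_z w = 0$, so $w$ is constant along the curves \eqref{curves}, i.e. $w(Z(s,t),t) = w_0(s)$. For each fixed $t$ the map $s \mapsto Z(s,t)$ is a smooth, strictly increasing diffeomorphism of $[-\pi,\pi]$ fixing the endpoints (by $\partial_s Z > 0$ in \eqref{curves-derivative}), hence $w(\cdot,t) = w_0 \circ Z^{-1}(\cdot,t) \in H^1_{\rm per}$ with $w(\pm\pi,t) = 0$, and $t \mapsto w(\cdot,t)$ is continuous into $H^1_{\rm per}$ by the smooth dependence of the flow on $t$; dividing by $c_* - U_*$ gives $v(\cdot,t) \in X^1_{\rm per}$, and conservation of the mean of $v(\cdot,t)$ (the flux $(c_*-U_*)v$ vanishes at $\pm\pi$) places $v(\cdot,t) \in \dot L^2_{\rm per}$. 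Uniqueness follows from linearity: $v_0 = 0$ forces $w_0 = 0$, hence $w \equiv 0$ and $v \equiv 0$.

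For the norm bounds I would change variables. Fixing $t$ and substituting $z = Z(s,t)$ in $\| v(t) \|_{L^2_{\rm per}}^2 = \int_{-\pi}^{\pi} v(z,t)^2\, dz$, and using $v(Z(s,t),t) = V(s,t)$, gives
\[
\| v(t) \|_{L^2_{\rm per}}^2 = \int_{-\pi}^{\pi} V(s,t)^2\, \partial_s Z(s,t)\, ds .
\]
Inserting the explicit formulas \eqref{evolution-explicit} and \eqref{curves-derivative}, the powers of the factor $\pi\cosh(\pi t/6) - s\sinh(\pi t/6)$ collapse and one is left with
\[
\| v(t) \|_{L^2_{\rm per}}^2 = \frac{1}{\pi^2} \int_{-\pi}^{\pi} \bigl[ \pi\cosh(\tfrac{\pi t}{6}) - s\sinh(\tfrac{\pi t}{6}) \bigr]^2 v_0(s)^2\, ds .
\]
Since the bracket is affine in $s$ with slope $-\sinh(\pi t/6)$, for $s \in [-\pi,\pi]$ and $t \ge 0$ it lies between its endpoint values $\pi e^{-\pi t/6}$ and $\pi e^{\pi t/6}$; in particular its square is at most $\pi^2 e^{\pi t/3}$, and the upper bound \eqref{growth-truncated-upper} follows.

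For the lower bound I would expand the square in the last display and use $2\cosh x \sinh x = \sinh 2x$ to write
\[
\| v(t) \|_{L^2_{\rm per}}^2 = \cosh^2(\tfrac{\pi t}{6}) \| v_0 \|_{L^2_{\rm per}}^2 - \frac{\sinh(\tfrac{\pi t}{3})}{\pi} \int_{-\pi}^{\pi} s\, v_0(s)^2\, ds + \frac{\sinh^2(\tfrac{\pi t}{6})}{\pi^2} \int_{-\pi}^{\pi} s^2 v_0(s)^2\, ds .
\]
Under the hypothesis $\int_{-\pi}^{\pi} s\, v_0(s)^2\, ds = 0$ the middle term drops out and the last term is nonnegative, so $\| v(t) \|_{L^2_{\rm per}}^2 \ge \cosh^2(\tfrac{\pi t}{6}) \| v_0 \|_{L^2_{\rm per}}^2 \ge \tfrac14 e^{\pi t/3} \| v_0 \|_{L^2_{\rm per}}^2$, which is \eqref{growth-truncated}.

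I expect the only genuine difficulty to be in the first step: one must check that the weighted perturbation stays in $H^1_{\rm per}$ (and not merely in $L^2_{\rm per}$) under the nonlinear change of variables, keep careful track of the endpoint values so that periodicity is preserved and the identity $w_0(\pm\pi) = 0$ — which is exactly what forces $v_0 \in L^2_{\rm per}$ — is respected, and verify that $t \mapsto v(\cdot,t)$ is continuous in the $X^1_{\rm per}$ norm rather than only pointwise. Everything else reduces to the elementary two-sided estimate on the factor $\pi\cosh(\pi t/6) - s\sinh(\pi t/6)$ displayed above.
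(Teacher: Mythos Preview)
Your argument is correct and follows the paper's route: both proofs derive the identity
\[
\|v(t)\|_{L^2_{\rm per}}^2=\frac{1}{\pi^2}\int_{-\pi}^{\pi}\bigl[\pi\cosh(\tfrac{\pi t}{6})-s\sinh(\tfrac{\pi t}{6})\bigr]^2 v_0(s)^2\,ds
\]
from the explicit characteristic formulas \eqref{curves-explicit}--\eqref{evolution-explicit} and read off the upper and lower bounds in exactly the same way. The only variation is in the well-posedness step: the paper verifies $v(\cdot,t)\in X^1_{\rm per}$ by applying the chain rule directly to the explicit solution, whereas you note that the weighted quantity $w=(c_*-U_*)v$ obeys the pure transport equation $w_t+\tfrac16(z^2-\pi^2)w_z=0$ and hence stays in $H^1_{\rm per}$ with $w(\pm\pi,t)=0$, which is a clean way to see membership in $X^1_{\rm per}$. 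One point worth making explicit in your write-up: passing from $w(\cdot,t)\in H^1_{\rm per}$ with zero endpoint values back to $v(\cdot,t)=w(\cdot,t)/(c_*-U_*)\in L^2_{\rm per}$ is a Hardy-inequality step, or equivalently can be read off from the explicit formula \eqref{evolution-explicit} that you already invoke for the norm computation.
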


\begin{proof}
Existence of a global solution in the explicit form (\ref{curves-explicit}) and (\ref{evolution-explicit})
is obtained from the method of characteristics. By using the chain rule and (\ref{curves-derivative}), we verify
that the mean-zero constraint is preserved by the time evolution:
\begin{eqnarray*}
\int_{-\pi}^{\pi} v(z,t) dz = \int_{-\pi}^{\pi} V(s,t) \frac{\partial Z}{\partial s} ds
= \int_{-\pi}^{\pi} v_0(s) ds = 0.
\end{eqnarray*}
The explicit expression (\ref{evolution-explicit}) implies that $V(\cdot,t) \in X^1_{\rm per}$
if $v_0 \in X^1_{\rm per}$ and $t \in \mathbb{R}$. On the other hand, the explicit expression
(\ref{curves-explicit}) implies that for every $\tau > 0$, there exists $C_{\tau} > 0$ such that
$$
\frac{\partial}{\partial s} Z(s,t) \geq C_{\tau}, \quad s \in [-\pi,\pi], \;\; t \in [-\tau,\tau].
$$
Hence, the chain rule implies that $v(\cdot,t) \in X^1_{\rm per}$ if $v_0 \in X^1_{\rm per}$
and $t \in \mathbb{R}$. Uniqueness of such global solutions follows by standard theory (see Theorem 3.1 in \cite{Bressan}).

It remains to prove the sharp exponential growth in the bounds (\ref{growth-truncated-upper}) and (\ref{growth-truncated}).
By the chain rule, we obtain
\begin{eqnarray*}
\int_{-\pi}^{\pi} v(z,t)^2 dz & = & \int_{-\pi}^{\pi} V(s,t)^2 \frac{\partial Z}{\partial s} ds
= \frac{1}{\pi^2}  \int_{-\pi}^{\pi} [\pi \cosh(\pi t/6) - s \sinh(\pi t/6)]^2 v_0(s)^2 ds.
\end{eqnarray*}
From here, we have the upper bound
$$
\| v(t) \|^2_{L^2_{\rm per}} \leq e^{\pi t/3} \| v_0 \|^2_{L^2_{\rm per}}
$$
and the lower bound under the additional condition $\int_{-\pi}^{\pi} s v_0(s)^2 ds = 0$:
$$
\| v(t) \|^2_{L^2_{\rm per}} = \cosh(\pi t/6)^2 \| v_0 \|^2_{L^2_{\rm per}}
+ \frac{1}{\pi^2} \sinh(\pi t/6)^2 \| s v_0 \|^2_{L^2_{\rm per}} \geq \frac{1}{4} e^{\pi t/3} \| v_0 \|^2_{L^2_{\rm per}}.
$$
Taking the square root of these bounds yields (\ref{growth-truncated-upper}) and (\ref{growth-truncated}).
\end{proof}

\begin{remark}
By the chain rule, we also have
\begin{eqnarray*}
\int_{-\pi}^{\pi} \left[ \partial_z (\pi^2 - z^2) v(z,t) \right]^2 dz
= \frac{1}{\pi^2}  \int_{-\pi}^{\pi} [\pi \cosh(\pi t/6) - s \sinh(\pi t/6)]^2 \left[ \partial_s (\pi^2 - s^2) v_0(s) \right]^2 ds,
\end{eqnarray*}
from which the sharp exponential growth with the same growth rate as in the bounds (\ref{growth-truncated-upper}) and
(\ref{growth-truncated}) can be established for the second term in the
$X^1_{\rm per}$ norm given by (\ref{norm-X-1}).
\end{remark}

\begin{remark}
\label{remark-L1}
The global solution in Lemma \ref{lemma-truncated} remains bounded in $L^1$.
This follows from the chain rule:
\begin{eqnarray*}
\int_{-\pi}^{\pi} |v(z,t)| dz = \int_{-\pi}^{\pi} |V(s,t)| \frac{\partial Z}{\partial s} ds
= \int_{-\pi}^{\pi} |v_0(s)| ds.
\end{eqnarray*}
Since
\begin{equation}
\label{L1-L2}
\| v_0 \|_{L^1_{\rm per}} \leq (2\pi)^{1/2} \| v_0 \|_{L^2_{\rm per}},
\end{equation}
hence $v_0 \in \dot{L}^2_{\rm per}$ implies $v_0 \in L^1$.
Extending this bound to the time-dependent solution,
\begin{equation}
\label{L1-L2-time}
\| v(t) \|_{L^1_{\rm per}} \leq (2\pi)^{1/2} \| v(t) \|_{L^2_{\rm per}}, \quad t > 0,
\end{equation}
shows that the $L^1$ norm of the global solution $v(t)$ may remain bounded
even if the $L^2$ norm of this solution grows exponentially.
\end{remark}

\begin{remark}
\label{remark-energy}
Truncating a quadratic form associated with the self-adjoint operator $L$ in (\ref{operator-L})
and using the chain rule yield the energy conservation for the truncated evolution (\ref{truncOst}):
\begin{eqnarray*}
\int_{-\pi}^{\pi} (\pi^2 - z^2) v(z,t)^2 dz = \int_{-\pi}^{\pi} \left[ \pi^2 - Z(s,t)^2 \right] V(s,t)^2 \frac{\partial Z}{\partial s} ds
= \int_{-\pi}^{\pi} (\pi^2 - s^2) v_0(s)^2 ds.
\end{eqnarray*}
The energy conservation shows that
the truncated evolution leads to the exponential growth of $\| v(t) \|_{L^2_{\rm per}}^2$
and $\| z v(t) \|_{L^2_{\rm per}}^2$ but the difference between the two squared norms remains bounded.
\end{remark}

\begin{remark}
For the smooth periodic waves of Lemma \ref{lemma-smooth-wave} satisfying $U(z) < c$ for every $z \in [-\pi,\pi]$,
the truncated energy $\int_{-\pi}^{\pi}(c-U) v^2 dz$ is coercive in the $L^2$ norm, hence the energy conservation
$$
\int_{-\pi}^{\pi} \left[ c - U(z)  \right] v(z,t)^2 dz = \int_{-\pi}^{\pi} \left[ c-U(z) \right] v_0(s)^2 ds
$$
implies a global time-independent bound on $\| v(t) \|_{L^2_{\rm per}}^2$,
where $v(t)$ is a solution of the truncation of the linear evolution
equation (\ref{linOst}) without the $\partial_z^{-1} v$ term.
\end{remark}

\begin{remark}
For the smooth periodic waves of Lemma \ref{lemma-smooth-wave}, the characteristic curves reach the boundaries $z = \pm \pi$
in finite time because $z = \pm \pi$ are not critical points of the differential equations
for the characteristic curves. On the other hand, for the peaked periodic wave (\ref{peaked-wave}), the characteristic curves
reach  the boundaries $z = \pm \pi$ in infinite time. The latter property induces exponential growth of the global solutions to
the Cauchy problem (\ref{truncOst}) as is shown in Lemma \ref{lemma-truncated}.
\end{remark}

\subsection{Linear instability of full evolution}
\label{section-full}

Here we consider the full linearized evolution problem (\ref{linOst})
with (\ref{explicit-U-c}) and rewrite the evolution problem in the form
\begin{equation}
\label{fullOst}
\left\{ \begin{array}{l} v_t + \frac{1}{6} \partial_z \left[ (z^2 - \pi^2) v \right]  = \partial_z^{-1} v, \quad t > 0,\\
v|_{t=0} = v_0, \end{array} \right.
\end{equation}
where the initial data $v_0$ is taken in $X^1_{\rm per}$.

\begin{lemma}
\label{lemma-sg}
For every $v_0\in X^1_{\rm per}$ there exists a unique global solution $v \in C(\mathbb{R},X^1_{\rm per})$
of the Cauchy problem  \eqref{fullOst}.
\end{lemma}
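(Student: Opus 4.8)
The plan is to treat the full linearized equation (\ref{fullOst}) as a bounded perturbation of the truncated evolution (\ref{truncOst}) analyzed in Lemma~\ref{lemma-truncated}, and to invoke a Duhamel/fixed-point argument on the space $X^1_{\rm per}$. Concretely, let $T(t)$ denote the solution operator of the truncated problem (\ref{truncOst}), whose explicit form is given by (\ref{curves-explicit})--(\ref{evolution-explicit}); by Lemma~\ref{lemma-truncated} and the subsequent Remark, $T(t)$ is a strongly continuous group on $X^1_{\rm per}$ with the norm bound $\|T(t)\|_{X^1_{\rm per}\to X^1_{\rm per}} \le M e^{\pi|t|/6}$ for some $M>0$ (the exponential factor coming from the $L^2$ part of the norm, the second term in (\ref{norm-X-1}) enjoying the same growth rate by the Remark following Lemma~\ref{lemma-truncated}). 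The solution of (\ref{fullOst}) is then sought as the fixed point of the Duhamel map
\begin{equation*}
v(t) = T(t) v_0 + \int_0^t T(t-t')\,\partial_{z}^{-1} v(t')\,dt'.
\end{equation*}

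First I would check that $\partial_z^{-1}$ is a bounded operator on $X^1_{\rm per}$: since $\partial_z^{-1}:\dot L^2_{\rm per}\to \dot H^1_{\rm per}$ and multiplication by the bounded function $(c_*-U_*)\in C_{\rm per}$ maps $\dot H^1_{\rm per}$ into $H^1_{\rm per}$, one gets $\|\partial_z^{-1}v\|_{X^1_{\rm per}}\lesssim \|v\|_{\dot L^2_{\rm per}}\le \|v\|_{X^1_{\rm per}}$; the mean-zero constraint is automatically respected because $\partial_z^{-1}$ produces a zero-mean function and $T(t)$ preserves zero mean (as shown in the proof of Lemma~\ref{lemma-truncated}). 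Next, on any finite interval $[-\tau,\tau]$ the Duhamel map is a contraction on $C([-\tau,\tau],X^1_{\rm per})$ after possibly iterating, by the standard estimate
\begin{equation*}
\|v-\tilde v\|_{C([-\tau,\tau],X^1_{\rm per})} \le M e^{\pi\tau/6}\,\|\partial_z^{-1}\|\,\tau\,\|v-\tilde v\|_{C([-\tau,\tau],X^1_{\rm per})},
\end{equation*}
so one obtains a unique local solution for $\tau$ small; since the estimate is linear in $v$, there is no blow-up and the solution extends globally by the usual continuation argument (Grönwall gives $\|v(t)\|_{X^1_{\rm per}}\le M\|v_0\|_{X^1_{\rm per}}e^{(\pi/6+M\|\partial_z^{-1}\|)|t|}$). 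Continuity in $t$ with values in $X^1_{\rm per}$ follows from strong continuity of $T(t)$ and continuity of the Duhamel integral. Uniqueness on all of $\mathbb{R}$ is then immediate from the linear Grönwall bound applied to the difference of two solutions.

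The main obstacle I anticipate is not the abstract semigroup machinery but verifying that $T(t)$ is genuinely strongly continuous on $X^1_{\rm per}$ with its graph-type norm (\ref{norm-X-1}) — the second component $\partial_z[(c_*-U_*)v]$ must be controlled along characteristics, and one must confirm that the chain-rule identity in the Remark after Lemma~\ref{lemma-truncated} indeed yields the $e^{\pi t/6}$ rate rather than something worse near $z=\pm\pi$, where $\partial_s Z(s,t)$ degenerates as $t\to\infty$ (though it stays bounded below on finite time intervals, which is all that is needed here). A secondary technical point is to make sure the fixed-point space genuinely sits inside $X^1_{\rm per}$ rather than merely $\dot L^2_{\rm per}$; this is handled by the boundedness of $\partial_z^{-1}$ on $X^1_{\rm per}$ noted above, together with the fact that $T(t)$ maps $X^1_{\rm per}$ to itself (proved in Lemma~\ref{lemma-truncated}). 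Once these are in place, Lemma~\ref{lemma-sg} follows, and the sharp growth bounds (\ref{growth-semigroup}) will be extracted in the next step by choosing $v_0$ in the subspace cut out by the constraints (\ref{constraint-1})--(\ref{constraint-2}) and showing the Duhamel correction is lower-order relative to the $e^{\pi t/6}$ growth of the leading term.
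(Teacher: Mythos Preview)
Your approach is correct and essentially the same as the paper's: both treat the full problem as a bounded perturbation of the truncated evolution from Lemma~\ref{lemma-truncated}. The only cosmetic difference is that the paper works on $\dot L^2_{\rm per}$ (where boundedness of $\partial_z^{-1}$ is immediate) and invokes the Bounded Perturbation Theorem from Engel--Nagel directly, whereas you carry out the equivalent Duhamel fixed-point argument by hand on $X^1_{\rm per}$; your extra check that $\partial_z^{-1}$ is bounded on $X^1_{\rm per}$ is correct but not needed in the paper's formulation.
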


\begin{proof}
By Lemma \ref{lemma-truncated}, the Cauchy problem (\ref{truncOst}) with $v_0 \in X^1_{\rm per}$
has a unique global solution $v \in C(\mathbb{R},X^1_{\rm per})$. In the framework of semigroup theory,
the evolution equation  \eqref{truncOst} can be written in the form $v_t = A_0 v$,
where
$$
A_0 := \frac{1}{6} \partial_z  (\pi^2 - z^2) v.
$$
Existence of a unique global solution $v \in C(\mathbb{R},X^1_{\rm per})$ implies
that the operator $A_0$ with domain $D(A_0) = X^1_{\rm per}$ is the infinitesimal generator of
a strongly continuous semigroup $(S_0(t))_{t\geq 0}$ on $\dot L^2_{\rm per}$.
Since  $\partial_z^{-1} : \dot{L}^2_{\rm per} \to \dot{L}^2_{\rm per}$ is a bounded operator,
the Bounded Perturbation Theorem (see Theorem III,1.3 on p.~158 in \cite{Engel}) implies that
the operator
$$
A := A_0 + \partial_z^{-1}
$$
with the same domain $D(A) = D(A_0) = X^1_{\rm per}$
also generates a strongly continuous semigroup $(S(t))_{t\geq 0}$ on $\dot{L}^2_{\rm per}$.
Therefore, the evolution equation in the Cauchy problem \eqref{fullOst} can be viewed
as a bounded perturbation of the evolution equation in the Cauchy problem \eqref{truncOst}.
The assertion of the Lemma then follows by Proposition 6.2 on  p.~145 in \cite{Engel}.
\end{proof}

In what follows, we obtain bounds on the global solution $v \in C(\mathbb{R},X^1_{\rm per})$
to the Cauchy problem (\ref{fullOst}). First, we note the following upper bound on the growth of the global solution.

\begin{lemma}
\label{lemma-upper-bound}
A global solution $v \in C(\mathbb{R},X^1_{\rm per})$
to the Cauchy problem (\ref{fullOst}) in Lemma \ref{lemma-sg} satisfies the upper bound
\begin{equation}
\label{growth-full}
\| v(t) \|_{L^2_{\rm per}} \leq \| v_0 \|_{L^2_{\rm per}} e^{\pi t/6}, \quad t > 0.
\end{equation}
\end{lemma}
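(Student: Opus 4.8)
The plan is to estimate the $L^2$ norm of the solution directly via an energy-type inequality, exploiting the fact that the dispersive term $\partial_z^{-1} v$ does not increase the $L^2$ norm. First I would take the solution $v \in C(\mathbb{R},X^1_{\rm per})$ from Lemma \ref{lemma-sg}, multiply the evolution equation (\ref{fullOst}) by $v$ and integrate over $[-\pi,\pi]$. The contribution of the transport term $\frac{1}{6}\partial_z[(z^2-\pi^2)v]$ is handled by integration by parts, using that $(z^2-\pi^2)v^2$ vanishes at $z = \pm\pi$ (because $z^2-\pi^2 = 0$ there, and $v(\cdot,t)\in X^1_{\rm per}$ guarantees $(c_*-U_*)v = \frac{1}{6}(\pi^2-z^2)v \in H^1_{\rm per}\hookrightarrow C_{\rm per}$, so the boundary terms are well-defined and zero). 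This produces the identity
\begin{equation*}
\frac{1}{2}\frac{d}{dt}\|v(t)\|_{L^2_{\rm per}}^2 = -\frac{1}{6}\int_{-\pi}^{\pi}\partial_z\big[(z^2-\pi^2)v\big]\,v\,dz + \int_{-\pi}^{\pi}(\partial_z^{-1}v)\,v\,dz.
\end{equation*}

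Next I would simplify each term. For the transport term, integrating by parts gives $\frac{1}{6}\int (z^2-\pi^2)\,v\,\partial_z v\,dz = \frac{1}{12}\int (z^2-\pi^2)\,\partial_z(v^2)\,dz = -\frac{1}{12}\int 2z\,v^2\,dz = -\frac{1}{6}\int z\,v^2\,dz$; tracking the signs carefully this contributes $+\frac{1}{6}\int z\,v^2\,dz$ to $\frac{1}{2}\frac{d}{dt}\|v\|^2$, which is bounded above by $\frac{\pi}{6}\|v(t)\|_{L^2_{\rm per}}^2$ since $|z|\le\pi$ on $[-\pi,\pi]$. For the dispersive term, I would use the skew-adjointness of $\partial_z$ together with the self-adjointness of $\partial_z^{-2}$: since $v$ has zero mean, $\int(\partial_z^{-1}v)\,v\,dz = \int(\partial_z^{-1}v)\,\partial_z(\partial_z^{-1}v)\,dz = \frac{1}{2}\int\partial_z\big[(\partial_z^{-1}v)^2\big]\,dz = 0$ by periodicity of $\partial_z^{-1}v\in\dot H^1_{\rm per}\subset C_{\rm per}$. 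Hence the dispersive term drops out entirely, and we are left with the differential inequality
\begin{equation*}
\frac{1}{2}\frac{d}{dt}\|v(t)\|_{L^2_{\rm per}}^2 \leq \frac{\pi}{6}\|v(t)\|_{L^2_{\rm per}}^2, \quad t > 0.
\end{equation*}

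Finally I would apply Grönwall's inequality to conclude $\|v(t)\|_{L^2_{\rm per}}^2 \leq \|v_0\|_{L^2_{\rm per}}^2 e^{\pi t/3}$, and take square roots to obtain (\ref{growth-full}). The main technical obstacle I anticipate is justifying the energy identity itself: multiplying by $v$ and integrating by parts requires enough regularity to make the boundary terms vanish and the manipulations legitimate. This should follow from $v(\cdot,t)\in X^1_{\rm per}$ — which gives $(\pi^2-z^2)v\in H^1_{\rm per}$ and hence the boundary term $(z^2-\pi^2)v^2|_{-\pi}^{\pi}=0$ — combined with a density/approximation argument or the semigroup framework of Lemma \ref{lemma-sg} (the identity holds for smooth data and passes to the limit). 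Alternatively, one can avoid differentiating under the integral sign by using Duhamel's formula: writing $v(t) = S_0(t)v_0 + \int_0^t S_0(t-s)\partial_z^{-1}v(s)\,ds$, combining the sharp bound $\|S_0(t)\|\le e^{\pi t/6}$ from Lemma \ref{lemma-truncated} with a Grönwall argument — but the direct energy estimate is cleaner once the dispersive term is seen to vanish, so I would present that route.
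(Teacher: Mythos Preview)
Your proposal is correct and follows essentially the same route as the paper: multiply by $v$, observe that $\int_{-\pi}^{\pi} v\,(\partial_z^{-1}v)\,dz = 0$ by periodicity, integrate the transport term by parts to obtain $\frac{1}{2}\frac{d}{dt}\|v\|_{L^2_{\rm per}}^2 = -\frac{1}{6}\int_{-\pi}^{\pi} z\,v^2\,dz$, bound by $\frac{\pi}{6}\|v\|_{L^2_{\rm per}}^2$, and apply Gronwall. Note a harmless sign slip in your write-up (the transport contribution is $-\frac{1}{6}\int z\,v^2\,dz$, not $+\frac{1}{6}$); the bound $|z|\le\pi$ makes this immaterial, and your added discussion of why the boundary terms vanish via $v\in X^1_{\rm per}$ is a useful clarification the paper leaves implicit.
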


\begin{proof}
Note the following integration yields
$$
\int_{-\pi}^{\pi} v (\partial_z^{-1} v )dz = \frac{1}{2} (\partial_z^{-1} v)^2 |_{z = -\pi}^{z = \pi} = 0,
$$
since $\partial_z^{-1} v \in H^1_{\rm per}$ and
hence $\partial_z^{-1} v \in C_{\rm per}$ by Sobolev's embedding.
Integrating by parts yields the following balance equation
$$
\frac{d}{dt} \frac{1}{2} \|v(t) \|^2_{L^2_{\rm per}} = \frac{1}{6} \int_{-\pi}^{\pi} v \partial_z \left[ (\pi^2 - z^2) v \right] dz
= - \frac{1}{6} \int_{-\pi}^{\pi} (\pi^2 - z^2) v \partial_z v dz = -\frac{1}{6} \int_{-\pi}^{\pi} z v^2 dz.
$$
Hence
$$
\frac{d}{dt} \|v(t) \|^2_{L^2_{\rm per}} \leq \frac{\pi}{3} \|v(t) \|^2_{L^2_{\rm per}}
$$
and Gronwall's inequality yields the desired bound (\ref{growth-full}).
\end{proof}

In order to obtain the lower bound on the $L^2$ norm of the global solution
to the Cauchy problem (\ref{fullOst}), we  use the generalized
method of characteristics and treat $\partial_z^{-1} v(z,t)$
as a source term in (\ref{truncOst}). This term satisfies the following useful bound (also proven in \cite{LPS}).

\begin{lemma}
If $g := \partial_z^{-1} v \in \dot{H}^1_{\rm per}$, then
\begin{equation}
\label{L1-Linfty}
\| g\|_{L^{\infty}_{\rm per}} \leq \| v \|_{L^1_{\rm per}}.
\end{equation}
\end{lemma}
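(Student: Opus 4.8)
The plan is to prove the bound $\|g\|_{L^\infty_{\rm per}} \le \|v\|_{L^1_{\rm per}}$ directly from the pointwise representation of the anti-derivative with zero mean. Since $v \in \dot{L}^2_{\rm per}$ implies $g = \partial_z^{-1} v \in \dot{H}^1_{\rm per} \subset C_{\rm per}$ by Sobolev embedding, the function $g$ is continuous and periodic with $g(-\pi) = g(\pi)$, $\int_{-\pi}^{\pi} g(z)\,dz$ arbitrary, but crucially $g' = v$ a.e.\ and $g$ has zero mean. The key observation is that a continuous periodic function $g$ with $g' = v$ must attain both a maximum and a minimum on $[-\pi,\pi]$, and at an interior extremum one has $g' = 0$ in the weak sense; more usefully, since $g$ is periodic, there exist points where $g$ vanishes or where the total variation argument applies.

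First I would use that $g(-\pi) = g(\pi)$ together with continuity to find, by the intermediate value theorem applied to the zero-mean function $g$, a point $z_* \in [-\pi,\pi]$ with $g(z_*) = 0$ (the mean being zero forces a sign change unless $g \equiv 0$, in which case the bound is trivial). Then for any $z \in [-\pi,\pi]$ we can write
\begin{equation}
\label{fundamental-thm}
g(z) = g(z_*) + \int_{z_*}^{z} v(z')\,dz' = \int_{z_*}^{z} v(z')\,dz',
\end{equation}
using that $g$ is absolutely continuous (being in $H^1_{\rm per}$) with weak derivative $v$. Taking absolute values gives $|g(z)| \le \left| \int_{z_*}^{z} v(z')\,dz' \right| \le \int_{-\pi}^{\pi} |v(z')|\,dz' = \|v\|_{L^1_{\rm per}}$, and taking the supremum over $z \in [-\pi,\pi]$ yields the claim. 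A small subtlety is whether the integral from $z_*$ to $z$ could wrap around the periodic interval — here one should take $z_*$ and $z$ both in the fundamental domain $[-\pi,\pi]$ and integrate along the straight segment between them, which is contained in $[-\pi,\pi]$, so no wrap-around occurs and the bound $\int_{\text{segment}} |v| \le \int_{-\pi}^{\pi} |v|$ is immediate.

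The main (and only genuine) obstacle is justifying the existence of the zero $z_*$ and the validity of the fundamental theorem of calculus \eqref{fundamental-thm} at the regularity level $v \in L^2 \hookrightarrow L^1$; both are standard since $g \in H^1_{\rm per}$ is absolutely continuous on $[-\pi,\pi]$ and the zero-mean condition $\int_{-\pi}^{\pi} g = 0$ forces either $g \equiv 0$ or a sign change. Everything else is a one-line estimate. One could alternatively avoid locating a zero of $g$ and instead average the identity $g(z) = g(y) + \int_y^z v$ over $y \in [-\pi,\pi]$, using $\int_{-\pi}^{\pi} g(y)\,dy = 0$, to get $g(z) = \frac{1}{2\pi}\int_{-\pi}^{\pi}\int_y^z v(z')\,dz'\,dy$ and then $|g(z)| \le \|v\|_{L^1_{\rm per}}$ directly; this is perhaps the cleanest route and is the one I would actually write down, since it uses only the zero-mean normalization and no extremal-point argument.
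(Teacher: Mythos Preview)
Your proposal is correct and your primary argument --- using the zero-mean of $g$ to locate a point $z_*$ with $g(z_*)=0$ and then writing $g(z)=\int_{z_*}^z v(z')\,dz'$ --- is exactly the paper's proof. The averaging alternative you mention at the end also works and gives the same constant, but the paper uses the zero-of-$g$ route.
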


\begin{proof}
By Sobolev embedding of $H^1_{\rm per}$ into $C_{\rm per}$,
$g$ is a continuous $2\pi$-periodic function with zero mean. Therefore, there exists $\zeta \in [-\pi,\pi]$ such that
$g(\zeta) = 0$. For every $z \in [-\pi,\pi]$, we can write
$$
g(z) = \int_{\zeta}^z v(z') dz',
$$
from which bound (\ref{L1-Linfty}) follows. Note that $L^2$ is continuously embedded into
$L^1$ because of the bound (\ref{L1-L2}).
\end{proof}

By using the family of characteristic curves $z = Z(s,t)$ with $s \in [-\pi,\pi]$ and $t \geq 0$, where
$Z$ is defined by the same initial-value problem (\ref{curves}), and setting $V(s,t) := v(Z(s,t),t)$
and $G(s,t) := g(Z(s,t),t)$,
we obtain the evolution problem in the form
\begin{equation}
\label{evolution-full}
\left\{ \begin{array}{l}
\frac{d}{dt} V(s,t) = -\frac{1}{3} Z(s,t) V(s,t) + G(s,t), \quad t > 0,\\
V(s,0) = v_0(s). \end{array} \right.
\end{equation}

The family of characteristic curves $Z$ is still given by the same explicit form (\ref{curves-explicit}).
Integrating the differential equation (\ref{evolution-full}) with an integrating factor yields
the explicit solution for $V$ in the form
\begin{equation}
\label{evolution-explicit-full}
V(s,t) = \left[ v_0(s) + \int_0^t G(s,t') e^{\frac{1}{3} \int_0^{t'} Z(s,t'') dt''} dt' \right] e^{-\frac{1}{3} \int_0^t Z(s,t') dt'}
\end{equation}
By using the explicit solution (\ref{evolution-explicit-full}), we
are able to prove the linear instability result for the Cauchy problem (\ref{fullOst}).

\begin{lemma}
\label{lemma-full}
There exists $v_0 \in X^1_{\rm per}$ and $C > 0$ such that the unique global
solution $v \in C(\mathbb{R},X^1_{\rm per})$
to the Cauchy problem (\ref{fullOst}) in Lemma \ref{lemma-sg} satisfies the lower bound
\begin{equation}
\label{growth-full-lower}
\| v(t) \|_{L^2_{\rm per}} \geq C \| v_0 \|_{L^2_{\rm per}} e^{\pi t/6}, \quad t > 0.
\end{equation}
\end{lemma}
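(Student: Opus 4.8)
The plan is to produce an explicit initial datum $v_0 \in X^1_{\rm per}$ satisfying the constraints \eqref{constraint-1} and \eqref{constraint-2}, and to show that the source term $g = \partial_z^{-1} v$ in the generalized characteristic representation \eqref{evolution-explicit-full} produces only a controlled, subdominant correction to the exponential growth already established in Lemma \ref{lemma-truncated}. The key observation is that the integrating factor satisfies $e^{-\frac{1}{3}\int_0^t Z(s,t')\,dt'} = \big(\frac{\partial Z}{\partial s}\big)^{-1}$ in view of \eqref{curves-derivative}, so that squaring \eqref{evolution-explicit-full} and using the chain rule $\int_{-\pi}^{\pi} v(z,t)^2\,dz = \int_{-\pi}^{\pi} V(s,t)^2 \frac{\partial Z}{\partial s}\,ds$ gives
\begin{equation*}
\|v(t)\|_{L^2_{\rm per}}^2 = \frac{1}{\pi^2}\int_{-\pi}^{\pi} \big[\pi\cosh(\pi t/6) - s\sinh(\pi t/6)\big]^2 \Big[ v_0(s) + \int_0^t G(s,t') e^{\frac{1}{3}\int_0^{t'} Z(s,t'')\,dt''}\,dt'\Big]^2 ds.
\end{equation*}
The first piece, with $v_0(s)$ alone, is exactly the truncated expression analyzed in Lemma \ref{lemma-truncated}, and under \eqref{constraint-1} it equals $\cosh(\pi t/6)^2\|v_0\|_{L^2_{\rm per}}^2 + \frac{1}{\pi^2}\sinh(\pi t/6)^2\|s v_0\|_{L^2_{\rm per}}^2 \geq \frac14 e^{\pi t/3}\|v_0\|_{L^2_{\rm per}}^2$.

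The second step is to bound the correction term coming from $G$. Here I would use the two a priori bounds already available: Remark \ref{remark-L1} (or rather the $L^1$-conservation argument) together with Lemma \ref{lemma-upper-bound} gives $\|v(t)\|_{L^1_{\rm per}} \le (2\pi)^{1/2}\|v(t)\|_{L^2_{\rm per}} \le (2\pi)^{1/2} e^{\pi t/6}\|v_0\|_{L^2_{\rm per}}$, and then \eqref{L1-Linfty} yields $\|g(t)\|_{L^\infty_{\rm per}} \le (2\pi)^{1/2} e^{\pi t/6}\|v_0\|_{L^2_{\rm per}}$. Since $e^{\frac{1}{3}\int_0^{t'} Z(s,t'')\,dt''} = \frac{\partial Z}{\partial s}(s,t') = \frac{\pi^2}{[\pi\cosh(\pi t'/6) - s\sinh(\pi t'/6)]^2}$ and $|G(s,t')| \le \|g(t')\|_{L^\infty_{\rm per}}$, the inner integral $\int_0^t G(s,t') e^{\frac{1}{3}\int_0^{t'} Z\,dt''}\,dt'$ is bounded pointwise in $s$ by $C e^{\pi t/6}\|v_0\|_{L^2_{\rm per}}\int_0^t \frac{dt'}{[\pi\cosh(\pi t'/6)-s\sinh(\pi t'/6)]^2}$, which after multiplying back by $[\pi\cosh(\pi t/6) - s\sinh(\pi t/6)]^2/\pi^2$ and integrating in $s$ over $[-\pi,\pi]$ should be shown to grow strictly slower than $e^{\pi t/3}$ — the estimate $\pi\cosh(\pi t'/6)-s\sinh(\pi t'/6)\ge \pi e^{-\pi t'/6}/?$ is too crude, so one must track that for $s$ bounded away from $\pm\pi$ the integrand decays like $e^{-\pi t'/3}$, making the $t'$-integral bounded, while the contribution from $s$ near $\pm\pi$ is small in measure; altogether the correction term is $o(e^{\pi t/3}\|v_0\|_{L^2_{\rm per}}^2)$, or at worst $K t^2 e^{\pi t/3}$ which is still beaten by choosing $t$ large — actually one needs the bound uniformly in $t$, so the cleaner route is to prove the correction is $\le \epsilon e^{\pi t/3}\|v_0\|^2$ for a suitable choice, or simply $\le K e^{\pi t/6}\cdot e^{\pi t/6} = K e^{\pi t/3}$ with $K$ genuinely smaller than $1/4$ after picking $v_0$ appropriately.

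The third step is the selection of $v_0$: one wants $v_0$ concentrated (in an $L^2$ sense) so that $\|s v_0\|_{L^2_{\rm per}}$ is comparable to $\pi\|v_0\|_{L^2_{\rm per}}$, maximizing the lower-bound constant, while simultaneously being orthogonal to $1$, to $z$ (constraint \eqref{constraint-1} on $v_0^2$), and to $z^2$ (constraint \eqref{constraint-2}); a finite trigonometric polynomial, e.g. a suitable combination of $\cos(nz)$ and $\sin(nz)$ for small $n$, will satisfy all linear and quadratic constraints by a dimension count and can be made to lie in $X^1_{\rm per}$ trivially since it is smooth. One then fixes this $v_0$, applies the triangle inequality in $L^2$ to the displayed identity, $\|v(t)\|_{L^2_{\rm per}} \ge \|(\text{main term})\|_{L^2} - \|(\text{correction})\|_{L^2} \ge \tfrac12 e^{\pi t/6}\|v_0\|_{L^2_{\rm per}} - K^{1/2} e^{\pi t/6}\|v_0\|_{L^2_{\rm per}}$, and if the combined constant exceeds a positive number $C$ we are done. \textbf{The main obstacle} is precisely controlling the correction term's growth rate: a naive bound $\|g(t')\|_{L^\infty} \le C e^{\pi t'/6}\|v_0\|$ combined with $\int_0^{t'} Z$ in the integrating factor could a priori contribute up to a factor of $t$ (from the $\int_0^t dt'$) times $e^{\pi t/3}$, which would break the clean comparison with the $\frac14 e^{\pi t/3}$ lower bound; so one must either show the inner $t'$-integral is bounded (exploiting the decay $\frac{\partial Z}{\partial s}(s,t') \sim e^{-\pi t'/3}$ away from the endpoints $s=\pm\pi$ and a compensating smallness near the endpoints), or else only claim instability in the weaker sense of unbounded (not purely exponential) growth, which still suffices to contradict Definition \ref{def-linear-stability}.
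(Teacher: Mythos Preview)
Your setup is right and matches the paper's: the characteristic representation \eqref{evolution-explicit-full}, the chain-rule identity for $\|v(t)\|_{L^2}^2$, and the lower bound $\tfrac14 e^{\pi t/3}\|v_0\|_{L^2}^2$ for the main term under \eqref{constraint-1}. But there is a genuine gap in how you handle the correction, and your choice of $v_0$ would not close it.

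The triangle-inequality route you propose (bound the $L^2$ norm of the correction term and subtract) is too crude. With the pointwise estimate $|G(s,t')|\le\|g(t')\|_{L^\infty}\le (2\pi)^{1/2}e^{\pi t'/6}\|v_0\|_{L^2}$ and the integrating factor \eqref{curves-derivative}, the best uniform-in-$s$ bound on the weighted correction grows like $e^{\pi t/3}$, so its $L^2$ norm over $[-\pi,\pi]$ is bounded only by a constant times $e^{\pi t/3}\|v_0\|_{L^2}$, which swamps the $\tfrac12 e^{\pi t/6}\|v_0\|_{L^2}$ main term. The paper instead expands the square and drops the nonnegative $(\text{correction})^2$ piece, keeping only the \emph{cross term}
\[
\frac{2}{\pi^2}\int_{-\pi}^{\pi}[\pi\cosh(\pi t/6)-s\sinh(\pi t/6)]^2\,|v_0(s)|\int_0^t |G(s,t')|\,\frac{\pi^2\,dt'}{[\pi\cosh(\pi t'/6)-s\sinh(\pi t'/6)]^2}\,ds.
\]
The crucial observation is that the ratio $K(t,t',s)=\dfrac{\pi\cosh(\pi t/6)-s\sinh(\pi t/6)}{\pi\cosh(\pi t'/6)-s\sinh(\pi t'/6)}$ satisfies $\sup_{s}K(t,t',s)=e^{\pi(t-t')/6}$, so the $t'$-integral becomes $\int_0^t e^{\pi t'/6}e^{\pi(t-t')/3}\,dt'\le \tfrac{6}{\pi}e^{\pi t/3}$. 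What remains in $s$ is just $\int|v_0(s)|\,ds=\|v_0\|_{L^1}$, giving a cross-term bound proportional to $\|v_0\|_{L^1}\,\|v_0\|_{L^2}\,e^{\pi t/3}$, not $\|v_0\|_{L^2}^2\,e^{\pi t/3}$.

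This is precisely what makes the argument close, and it dictates the right choice of $v_0$: you need $\|v_0\|_{L^1}\ll\|v_0\|_{L^2}$, i.e.\ a \emph{concentrated} profile. A fixed trigonometric polynomial will never achieve this (for any smooth function of bounded complexity, $\|v_0\|_{L^1}$ and $\|v_0\|_{L^2}$ are comparable). The paper takes an odd function such as $v_0(z)=\dfrac{z(\pi^2-z^2)}{1+a^2z^2}$ with $a\to\infty$, for which $\|v_0\|_{L^1}=\mathcal{O}(a^{-2}\log a)$ while $\|v_0\|_{L^2}=\mathcal{O}(a^{-3/2})$; oddness automatically gives both constraints \eqref{constraint-1} and \eqref{constraint-2}. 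Your endpoint-splitting idea and the ``choose $t$ large'' remark do not repair the estimate: the bound must hold for all $t>0$, and the saving comes from the $L^1$/$L^2$ disparity of the initial datum, not from any decay in the $t'$-integral near $s=\pm\pi$.
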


\begin{proof}
By the chain rule, the explicit expression (\ref{evolution-explicit-full})  with the help of (\ref{curves-derivative})
yields the following equation:
\begin{eqnarray*}
\int_{-\pi}^{\pi} v(z,t)^2 dz & = & \int_{-\pi}^{\pi} V(s,t)^2 \frac{\partial Z}{\partial s} ds \\
& = & \frac{1}{\pi^2}
\int_{-\pi}^{\pi} [\pi \cosh(\pi t/6) - s \sinh(\pi t/6)]^2 \\
& \phantom{t} & \phantom{text}
\times \left[ v_0(s)
+ \int_0^t \frac{\pi^2 G(s,t')}{[\pi \cosh(\pi t'/6) - s \sinh(\pi t'/6)]^2} dt' \right]^2 ds.
\end{eqnarray*}
Let us assume the same constraint $\int_{-\pi}^{\pi} s v_0(s)^2 ds = 0$ as in Lemma \ref{lemma-truncated}.
Neglecting positive terms in the lower bound, we obtain
\begin{eqnarray}
\label{lower-bound-interim}
\| v(t) \|^2_{L^2_{\rm per}} & \geq & \frac{1}{4} e^{\pi t/3} \| v_0 \|^2_{L^2_{\rm per}}\\
\nonumber
& \phantom{t} &
- 2 \int_{-\pi}^{\pi} \int_0^t |v_0(s)| |G(s,t')| \frac{[\pi \cosh(\pi t/6) - s \sinh(\pi t/6)]^2}{[\pi \cosh(\pi t'/6) - s \sinh(\pi t'/6)]^2} dt' ds.
\end{eqnarray}
Let us define for any $t > 0$,
$$
K(t,t',s) := \frac{\pi \cosh(\pi t/6) - s \sinh(\pi t/6)}{\pi \cosh(\pi t'/6) - s \sinh(\pi t'/6)}, \quad t' \in [0,t], \;\; s \in [-\pi,\pi].
$$
We prove that for every $0 \leq t' \leq t$,
\begin{equation}
\label{bound-on-K}
\sup_{s \in [-\pi,\pi]} K(t,t',s) = e^{\pi (t-t')/6}.
\end{equation}
Indeed, $K(t,t',s) = e^{\pi (t-t')/6} M(t,t',s)$, where
$$
M(t,t',s) := \frac{(\pi - s) + (\pi + s) e^{-\pi t/3}}{(\pi - s) + (\pi + s) e^{-\pi t'/3}},
$$
and $M$ is monotonically decreasing since $\partial_s M(t,t',s) \leq 0$ for every $t' \in [0,t]$
and $s \in [-\pi,\pi]$. Therefore, $M$ has a maximum at $s = -\pi$, where $M(t,t',-\pi) = 1$.

By using (\ref{lower-bound-interim}) and (\ref{bound-on-K}), we obtain
\begin{eqnarray*}
\| v(t) \|^2_{L^2_{\rm per}} & \geq & \frac{1}{4} e^{\pi t/3} \| v_0 \|^2_{L^2_{\rm per}}
- 2 \| v_0 \|_{L^1_{\rm per}} \int_0^t \| g(t') \|_{L^{\infty}_{\rm per}} e^{\pi (t-t')/3} dt'\\
& \geq & \frac{1}{4} e^{\pi t/3} \| v_0 \|^2_{L^2_{\rm per}}
- 2 \| v_0 \|_{L^1_{\rm per}} \int_0^t \| v(t') \|_{L^1_{\rm per}} e^{\pi (t-t')/3} dt'\\
& \geq & \frac{1}{4} e^{\pi t/3} \| v_0 \|^2_{L^2_{\rm per}}
- 2 \sqrt{2\pi} \| v_0 \|_{L^1_{\rm per}} \| v_0 \|_{L^2_{\rm per}} e^{\pi t/3} \int_0^t e^{-\pi t'/6} dt',
\end{eqnarray*}
where (\ref{L1-L2-time}), (\ref{growth-full}), and (\ref{L1-Linfty}) have been used in the last two inequalities.
Hence,
\begin{eqnarray*}
\| v(t) \|^2_{L^2_{\rm per}} e^{-\pi t/3} & \geq & \| v_0 \|_{L^2_{\rm per}}
 \left ( \frac{1}{4} \| v_0 \|_{L^2_{\rm per}}
- \frac{12\sqrt{2}}{\sqrt{\pi}} \| v_0 \|_{L^1_{\rm per}}\right )
\end{eqnarray*}
and since $\| v_0 \|_{L^2_{\rm per}}$ can be much larger than $\| v_0 \|_{L^1_{\rm per}}$ by the bound (\ref{L1-L2}),
there exist $v_0 \in X^1_{\rm per}$ and $C^2 \in (0,1/4)$ such that
\begin{equation}
\label{inequality}
\| v_0 \|_{L^1_{\rm per}} \leq \frac{\sqrt{\pi} (1-4C^2)}{48 \sqrt{2}} \| v_0 \|_{L^2_{\rm per}},
\end{equation}
and hence
\begin{eqnarray}
\label{bounds-desired-squared}
\| v(t) \|^2_{L^2_{\rm per}} e^{-\pi t/3} & \geq & C^2 \| v_0 \|^2_{L^2_{\rm per}}.
\end{eqnarray}
This yields the desired bound (\ref{growth-full-lower}).
\end{proof}

\begin{remark}
\label{rem-contraints}
Let us show that there exist functions $v_0 \in X^1_{\rm per}$ satisfying
the constraints (\ref{constraint-1}), (\ref{constraint-2}), and (\ref{inequality}).
Indeed, if $v_0$ is odd, then $v_0^2$ is even,
hence the two constraints (\ref{constraint-1}) and (\ref{constraint-2})
are satisfied simultaneously. From the class of
odd initial data we need to pick functions in $X^1_{\rm per}$
that satisfy the inequality (\ref{inequality}) for a fixed $C^2 \in (0,1/4)$. For example,
we can consider the following odd function in $\dot{H}^1_{\rm per}\subset X^1_{\rm per}$
\begin{equation}
\label{example-v-0}
v_0(x) = \frac{x (\pi^2-x^2)}{1 + a^2 x^2}, \quad x \in [-\pi,\pi],
\end{equation}
where $a > 0$ is a parameter. We obtain by direct computation,
$$
\| v_0 \|_{L^1_{\rm per}} = \left( \frac{\pi^2}{a^2} + \frac{1}{a^4} \right) \log(1+\pi^2 a^2) - \frac{\pi^2}{a^2}
$$
and
$$
\| v_0 \|_{L^2_{\rm per}}^2 = \frac{1}{a^3} \left[ \left( \pi^4 + \frac{6\pi^2}{a^2} + \frac{5}{a^4}\right) \arctan(\pi a)
 - \frac{\pi (15 + 13 \pi^2 a^2)}{3 a^3} \right].
$$
Since $\| v_0 \|_{L^1_{\rm per}} = \mathcal{O}(\log(a) a^{-2})$ decays to zero as $a \to \infty$ faster than
$\| v_0 \|_{L^2_{\rm per}} = \mathcal{O}(a^{-3/2})$,
inequality (\ref{inequality}) can be satisfied for sufficiently large $a$.
\end{remark}

\begin{remark}
If $v_0(\pm \pi) = 0$ like in the example (\ref{example-v-0}), then $v_0 \in \dot{H}^1_{\rm per}$
and the truncated linearized evolution (\ref{truncOst}) preserves the constraint
$v(\pm \pi,t) = 0$ for every $t \in \mathbb{R}$, see Remark \ref{rem-invariance}. However,
the integral term $\partial_z^{-1} v$ in the full linearized evolution (\ref{fullOst}) does not
generally preserve the same constraint because it is uniquely defined from the condition
that $\partial_z^{-1} v$ has zero mean. As a result, the full linearized equation
does not generally admit a solution $v \in C(\mathbb{R},\dot{H}^1_{\rm per})$ even if $v_0 \in \dot{H}^1_{\rm per}$.
\end{remark}

\begin{remark}
In the presence of the source term $G$, we are not able to show that $\| v(t) \|_{L^1_{\rm per}}$
remains bounded as $t \to \infty$, see Remark \ref{remark-L1}. By using the integral
$$
\int_{-\pi}^{\pi} \frac{\pi^2}{[\pi \cosh(\pi t'/6) - s \sinh(\pi t'/6)]^2} ds = 2\pi, \quad t' \in [0,t],
$$
we obtain the bound
\begin{eqnarray*}
\| v(t) \|_{L^1_{\rm per}} & \leq & \| v_0 \|_{L^1_{\rm per}} + 2\pi \int_0^t \| g(t') \|_{L^{\infty}_{\rm per}} dt',
\end{eqnarray*}
in view of \eqref{curves-derivative} and \eqref{evolution-explicit-full}.
Thanks to the bound (\ref{L1-Linfty}), the inequality is closed as follows:
\begin{eqnarray*}
\| v(t) \|_{L^1_{\rm per}}
& \leq & \| v_0 \|_{L^1_{\rm per}} + 2\pi \int_0^t \| v(t') \|_{L^1_{\rm per}} dt'.
\end{eqnarray*}
By Gronwall's inequality, this bound gives the fast exponential growth
$$
\| v(t) \|_{L^1_{\rm per}} \leq  \| v_0 \|_{L^1_{\rm per}} e^{2 \pi t},
$$
which cannot be sharp because $\| v(t) \|_{L^1_{\rm per}}$ is bounded by a slowly growing exponential
function that follows from the bounds (\ref{L1-L2-time}) and (\ref{growth-full}).
\end{remark}

\begin{remark}
There exists a conserved energy for the Cauchy problem (\ref{fullOst}),
see Remark \ref{remark-energy}, which is given by
\begin{equation}
\label{conserved-energy-lin}
\langle L v(t), v(t) \rangle =
\langle L v_0, v_0 \rangle,
\end{equation}
where the self-adjoint operator $L$ is defined by (\ref{operator-L}).
However, the conserved quantity (\ref{conserved-energy-lin}) does not
prevent $\| v(t) \|_{L^2_{\rm per}}$ from growing exponentially fast as $t \to \infty$
because the bounded operator $L$ is not coercive under the constraint (\ref{constraint-2}),
see Lemma \ref{lemma-spectrum}.
\end{remark}

\subsection{Spectrum of the linear self-adjoint operator $L$}
\label{section-L}

Here we consider the spectrum $\sigma(L)$ of the linear self-adjoint operator $L$ defined by (\ref{operator-L}).
We will prove that $\sigma(L)$ consists of the continuous spectrum on $[0,\pi^2/6]$, which includes
the embedded eigenvalue $\lambda_0 = 0$ with the eigenvector $\partial_z U$,
and a simple negative eigenvalue $\lambda_1 < 0$. No spectral gap
appears between $\lambda_0 = 0$ and the continuous spectrum.
The following lemma gives the corresponding result.

\begin{lemma}
\label{lemma-spectrum}
The spectrum of the self-adjoint operator $L$ given by (\ref{operator-L}) is
\begin{equation}
\label{spectrum}
\sigma(L) = \{ \lambda_1 \} \cup \left[ 0, \frac{\pi^2}{6} \right],
\end{equation}
where $\lambda_1 < 0$ is the unique zero of the transcendental equation
\begin{equation}
\label{transc}
(\pi^2 + 3 \lambda) \log \frac{\sqrt{\pi^2 - 6 \lambda} + \pi}{\sqrt{\pi^2 - 6 \lambda} - \pi} - 3 \pi \sqrt{\pi^2 - 6 \lambda} = 0,
\quad \lambda < 0.
\end{equation}
\end{lemma}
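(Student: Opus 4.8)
The plan is to exhibit $\sigma(L)$ as the union of an essential part equal to $[0,\pi^2/6]$ and a single isolated eigenvalue $\lambda_1<0$, and to pin down $\lambda_1$ by reducing the eigenvalue equation $Lv=\lambda v$ to an explicitly solvable second-order ODE. \emph{Essential spectrum and an upper bound.} On $\dot L^2_{\rm per}$ we have $L=\partial_z^{-2}+P_0MP_0$, where $M$ denotes multiplication by $c_*-U_*=\tfrac16(\pi^2-z^2)$. Since $\partial_z^{-2}$ has eigenvalues $-1/n^2$ it is compact (in fact Hilbert--Schmidt) on $\dot L^2_{\rm per}$. On $L^2_{\rm per}$ the multiplication operator $M$ has purely continuous spectrum equal to the range $[0,\pi^2/6]$ of $\tfrac16(\pi^2-z^2)$ (its level sets are finite, so $M$ has no eigenvalues), and passing to $\dot L^2_{\rm per}$ alters $M$ only by a finite-rank operator; hence $\sigma_{\rm ess}(P_0MP_0)=[0,\pi^2/6]$. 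By Weyl's theorem $\sigma_{\rm ess}(L)=[0,\pi^2/6]$, so outside this interval the spectrum consists only of isolated eigenvalues of finite multiplicity. Moreover, the quadratic form satisfies $\langle Lv,v\rangle=-\|\partial_z^{-1}v\|_{L^2_{\rm per}}^2+\tfrac16\int_{-\pi}^{\pi}(\pi^2-z^2)|v|^2\,dz\le\tfrac{\pi^2}{6}\|v\|_{L^2_{\rm per}}^2$, so $\sigma(L)\subset(-\infty,\pi^2/6]$; in particular the only eigenvalues of $L$ lying outside $[0,\pi^2/6]$ are negative.

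\emph{Reduction to an ODE.} Fix $\lambda<0$. If $v\in\dot L^2_{\rm per}$ solves $Lv=\lambda v$, set $w:=\partial_z^{-2}v\in\dot H^2_{\rm per}$, so $w''=v$; with $\beta^2:=\pi^2-6\lambda>\pi^2$ and a constant $\nu$, the equation is equivalent to the ODE $(\beta^2-z^2)w''+6w=6\nu$ on $(-\pi,\pi)$ together with the periodic conditions $w(-\pi)=w(\pi)$, $w'(-\pi)=w'(\pi)$ and the zero-mean condition $\int_{-\pi}^{\pi}w\,dz=0$; conversely, any solution of this boundary-value problem yields via $v=w''$ an eigenfunction of $L$ for $\lambda$ (the value of $\nu$ is then automatically $\tfrac1{2\pi}\int_{-\pi}^{\pi}(c_*-U_*)v\,dz$). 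Since $\beta^2-z^2>0$ on $[-\pi,\pi]$ the ODE is regular; a particular solution is $w\equiv\nu$, the homogeneous equation has the odd solution $w_1(z)=z(z^2-\beta^2)$ (checked directly), and, because the Wronskian is constant, reduction of order with an elementary partial-fraction integration of $1/w_1^2$ produces the even solution
\[
w_2(z)=\frac{2\beta^2-3z^2}{2\beta^4}-\frac{3z(z^2-\beta^2)}{4\beta^5}\,\log\Bigl|\frac{z-\beta}{z+\beta}\Bigr|.
\]
Writing the general solution $w=Aw_1+Bw_2+\nu$ and imposing $w'(-\pi)=w'(\pi)$ forces $B\,w_2'(\pi)=0$ (as $w_1'$ is even and $w_2'$ is odd); if $B=0$, the conditions $w(-\pi)=w(\pi)$ and $\int w\,dz=0$ force $A\,w_1(\pi)=0$ and $\nu=0$, hence $w\equiv0$ since $w_1(\pi)=6\pi\lambda\ne0$. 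Therefore $\lambda<0$ is an eigenvalue if and only if $w_2'(\pi)=0$ (then $A=0$, $\nu=-\tfrac{B}{2\pi}\int_{-\pi}^{\pi}w_2\,dz$, and the eigenspace is one-dimensional, spanned by $v=w_2''$). A direct differentiation gives $w_2'(\pi)=-\tfrac{9\pi}{2\beta^4}-\tfrac{3(3\pi^2-\beta^2)}{4\beta^5}\log\tfrac{\beta-\pi}{\beta+\pi}$, and $w_2'(\pi)=0$ with $\beta=\sqrt{\pi^2-6\lambda}$ is exactly the transcendental equation \eqref{transc}. (At $\lambda=0$, i.e.\ $\beta=\pi$, the solution $w_1$ becomes periodic and $v=w_1''\propto\partial_z U_*$, recovering the embedded eigenvalue $\lambda_0=0$.)

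\emph{The root $\lambda_1$.} Substituting $x:=\pi/\beta\in(0,1)$ and using $\operatorname{artanh}(x)=\tfrac12\log\tfrac{1+x}{1-x}$, equation \eqref{transc} becomes $G(x):=(3x^2-1)\operatorname{artanh}(x)-3x=0$. One checks $G(0)=0$, $G'(0)=-4<0$, $G(x)\to+\infty$ and $G'(x)\to+\infty$ as $x\to1^-$, and $G''(x)=6\operatorname{artanh}(x)+\tfrac{6x}{1-x^2}+\tfrac{4x}{(1-x^2)^2}>0$ on $(0,1)$. Hence $G'$ is strictly increasing and changes sign exactly once, so $G$ is strictly decreasing and then strictly increasing on $(0,1)$; since $G(0)=0$, $G$ stays negative until its minimum and then increases to $+\infty$, giving a unique zero $x_1\in(0,1)$, i.e.\ a unique $\lambda_1=\tfrac16\bigl(\pi^2-\pi^2/x_1^2\bigr)<0$. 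Combining with the first step, $\sigma(L)=\{\lambda_1\}\cup[0,\pi^2/6]$.

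The only genuinely computational steps are the reduction of order (the integral of $1/w_1^2$ and the resulting closed forms for $w_2$ and $w_2'(\pi)$) and the verification that $w_2'(\pi)=0$ is precisely \eqref{transc}; the matching of the periodic and zero-mean conditions and the convexity argument for the uniqueness of the root are routine, and I expect no conceptual obstacle beyond keeping the algebra straight.
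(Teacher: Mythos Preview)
Your argument is correct and takes a genuinely different route from the paper in the eigenvalue analysis, with a couple of concrete advantages.

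Both proofs identify the essential/continuous spectrum by writing $L$ as a compact perturbation of a multiplication-type operator; your use of Weyl's theorem is the same idea as the paper's appeal to Kato's theorem, just packaged differently. The real divergence is in the ODE reduction. The paper works with the eigenfunction itself: differentiating $Lw=\lambda w$ twice yields
\[
(\pi^2-z^2-6\lambda)\,w'' - 4z\,w' + 4w = 0,
\]
with explicit solutions $w_1(z)=z$ and an even $w_2$; the transcendental equation then arises from the \emph{zero-mean} condition on $w_2$, and the range $\lambda>\pi^2/6$ is handled by a separate explicit computation. You instead integrate, setting $w=\partial_z^{-2}v$, which produces the equation $(\beta^2-z^2)w''+6w=6\nu$ with \emph{no first-order term}; the transcendental equation then arises from the \emph{periodicity} condition $w_2'(\pi)=0$, and the range $\lambda>\pi^2/6$ is excluded in one line by the quadratic-form bound $\langle Lv,v\rangle\le\tfrac{\pi^2}{6}\|v\|^2$.

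What your route buys: the absence of a first-order term makes the Wronskian constant and reduction of order mechanical; and, more importantly, you give a clean analytical uniqueness proof for the root $\lambda_1$ via the convexity of $G(x)=(3x^2-1)\operatorname{artanh}(x)-3x$, whereas the paper establishes uniqueness only by inspection of a numerical plot. What the paper's route buys: working directly with the eigenfunction avoids having to set up and verify the correspondence $v\leftrightarrow w=\partial_z^{-2}v$ (including the role of the inhomogeneity $\nu$), though you handle this correctly. One minor point to tighten: the claim that ``passing to $\dot L^2_{\rm per}$ alters $M$ only by a finite-rank operator'' deserves one explicit sentence (e.g.\ extend $P_0MP_0$ by zero on constants and note that the difference from $M$ on $L^2_{\rm per}$ is rank at most two), but this is cosmetic rather than a gap.
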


\begin{proof}
By the spectral theorem (see, e.g., Definition 8.39, Theorem 8.70, and Theorem 8.71 in \cite{RR}),
the spectrum of the self-adjoint operator $L$ in $\dot{L}^2_{\rm per}$ denoted by $\sigma(L)$
may consist of only two disjoint sets on the real line:
the point spectrum of eigenvalues with eigenvectors in $\dot{L}^2_{\rm per}$
denoted by $\sigma_p(L)$ and the continuous spectrum denoted by $\sigma_c(L)$,
where the resolvent operator exists but is unbounded.

The self-adjoint operator $L$ in (\ref{operator-L}) is given by the sum of a bounded
operator $L_0$ and a compact operator $K$ given by
\begin{equation}
\label{bounded}
L_0 := \frac{1}{6} P_0 \left( \pi^2 - z^2 \right) P_0 : \; \dot{L}^2_{\rm per} \to \dot{L}^2_{\rm per}
\end{equation}
and
\begin{equation}
\label{compact}
K := P_0 \partial_z^{-2} P_0 : \; \dot{L}^2_{\rm per} \to \dot{L}^2_{\rm per}.
\end{equation}
Moreover, the compact operator is in the trace class since $\sum_{n=1}^{\infty} n^{-2} < \infty$.
By Kato's Theorem \cite{Kato} (see Theorem~4.4 on p.~540 in \cite{Kato-text}),
$\sigma_c(L) = \sigma_c(L_0)$. We show that $[0,\pi^2/6] \subseteq \sigma_c(L_0)$
by considering the odd functions in $\dot{L}^2_{\rm per}$, which can be represented
by the Fourier sine series. Let us denote the space of odd functions in $\dot{L}^2_{\rm per}$
by $L^2_{\rm per, odd}$. Then,
$$
L_0 f = \frac{1}{6} (\pi^2 - z^2) f, \quad \forall f \in L^2_{\rm per, odd}.
$$
Then, $\sigma_c(L_0)$ in $L^2_{\rm per, odd}$
coincides with the range of the multiplicative function
$h(z) = \frac{1}{6}(\pi^2-z^2)$ for $z \in [-\pi,\pi]$, which is $[0,\pi^2/6]$. Hence,
$[0,\pi^2/6] \subseteq \sigma_c(L_0)$ in $\dot{L}^2_{\rm per}$.

Let us show that $[0,\pi^2/6] \equiv \sigma_c(L_0)$ by working with the resolvent equation
$(L_0 -\lambda I) f = g$ for given $g \in \dot{L}^2_{\rm per}$ and $\lambda \notin [0,\pi^2/6]$.
The resolvent equation can be written in the component form for $z \in [-\pi,\pi]$:
$$
\frac{1}{6} (\pi^2 - 6 \lambda - z^2) f(z) - k(f) = g(z), \quad k(f) := \frac{1}{12 \pi} \int_{-\pi}^{\pi} (\pi^2-z^2) f(z) dz,
$$
where $f \in \dot{L}^2_{\rm per}$ is supposed to satisfy the zero-mean constraint
$\int_{-\pi}^{\pi} f(z) dz  = 0$. Computing the solution explicitly,
$$
f(z) = \frac{6}{\pi^2 - 6 \lambda - z^2} \left[ g(z) + k(f) \right],
$$
and using the zero mean constraint, we can define $k(f)$ in terms of $g$:
$$
k(f) = \frac{\int_{-\pi}^{\pi} \frac{g(z)}{\pi^2 - 6 \lambda - z^2} dz}{\int_{-\pi}^{\pi} \frac{1}{\pi^2 - 6 \lambda - z^2} dz}.
$$
For every $\lambda \notin [0,\pi^2/6]$, there exist positive constants $C_{\lambda}, C_{\lambda}' > 0$
such that
$$
\sup_{z \in [-\pi,\pi]} \frac{6}{|\pi^2 - 6 \lambda - z^2|} \leq C_{\lambda}, \quad
\left| \int_{-\pi}^{\pi} \frac{6}{\pi^2 - 6 \lambda - z^2} dz \right| \geq C_{\lambda}'.
$$
As a result, we obtain the bound
$$
\| f \|_{L^2_{\rm per}} \leq C_{\lambda} \left[ \| g \|_{L^2_{\rm per}} + |k(f)| \sqrt{2\pi} \right]
\leq C_{\lambda} \left[ 1 + 2\pi (C_{\lambda}')^{-1} C_{\lambda} \right] \| g \|_{L^2_{\rm per}}.
$$
Therefore, the resolvent operator $(L_0 - \lambda I)^{-1} : \dot{L}^2_{\rm per} \to \dot{L}^2_{\rm per}$
is bounded for every $\lambda \notin [0,\pi^2/6]$ so that $\sigma_c(L_0) = [0,\pi^2/6]$.\medskip

In order to study $\sigma_p(L) \in \mathbb{R} \backslash [0,\pi^2/6]$,
we consider the spectral problem for operator $L$ with the spectral parameter $\lambda \notin [0,\pi^2/6]$:
\begin{equation}
\label{spectral-problem}
\frac{1}{6} P_0 \left( \pi^2 - z^2 \right) w + P_0 \partial_z^{-2} w = \lambda w, \quad w \in \dot{L}^2_{\rm per}.
\end{equation}
Since $\partial_z^{-2} w \in H^2_{\rm per}$, bootstrapping arguments show that $w \in H^2_{\rm loc}$
on any compact subset in $(-\pi,\pi)$. Iterations of bootstrapping arguments yield $w \in H^{\infty}_{\rm loc}$.
Therefore, the spectral problem (\ref{spectral-problem}) can be differentiated twice on a compact subset in $(-\pi,\pi)$,
after which it is rewritten as the second-order differential equation
\begin{equation}
\label{second-order-ODE}
\left( \pi^2 - z^2 - 6 \lambda \right) \frac{d^2 w}{d z^2} - 4 z \frac{dw}{dz} + 4 w(z) = 0, \quad w \in H^{\infty}_{\rm loc},
\end{equation}
with the two linearly independent solutions for $\lambda \in \mathbb{R} \backslash [0,\pi^2/6]$,
\begin{eqnarray*}
w_1(z) = z
\end{eqnarray*}
and
\begin{eqnarray*}
w_2(z) = \left\{ \begin{array}{ll} -1 + \frac{z^2}{2(\pi^2 - z^2 - 6 \lambda)} + \frac{3z}{4\sqrt{\pi^2 - 6 \lambda}}
\log \frac{\sqrt{\pi^2-6\lambda} + z}{\sqrt{\pi^2 - 6 \lambda} - z}, \quad & \lambda < 0,\\
-1 + \frac{z^2}{2(\pi^2 - z^2 - 6 \lambda)} - \frac{3z}{2\sqrt{6 \lambda-\pi^2}}
\arctan \frac{z}{\sqrt{6 \lambda - \pi^2}}, \quad & \lambda > \frac{\pi^2}{6}. \end{array} \right.
\end{eqnarray*}
The first solution corresponds to the eigenvector $\partial_z U$ of the spectral problem (\ref{spectral-problem})
for the eigenvalue $\lambda_0 = 0$, which is embedded into $\sigma_c(L) = [0,\pi^2/6]$.
Since eigenvectors of the self-adjoint operator for distinct eigenvalues are orthogonal,
we are looking for solutions $w$ of the spectral problem (\ref{spectral-problem})
such that $\langle w, w_1 \rangle = 0$. Therefore, we take\footnote{Note that $\langle w_2,w_1 \rangle = 0$
because $w_1$ is odd and $w_2$ is even.} $w = w_2$
and extend it from $H^2_{\rm loc}$ to $\dot{L}^2_{\rm per}$.
This extension is achieved if and only if $w$ has zero mean, that is,
\begin{equation}
\label{zero-mean}
0 = \frac{1}{2\pi} \int_{-\pi}^{\pi} w_2(z) dz = \left\{ \begin{array}{ll}
-\frac{3}{4} + \frac{\pi^2 + 3 \lambda}{4 \pi \sqrt{\pi^2 - 6 \lambda}}
\log \frac{\sqrt{\pi^2 - 6 \lambda} + \pi}{\sqrt{\pi^2 - 6 \lambda} - \pi} \quad & \lambda < 0, \\
-\frac{3}{4} - \frac{\pi^2 + 3 \lambda}{2 \pi \sqrt{6 \lambda - \pi^2}}
\arctan \frac{\pi}{\sqrt{6 \lambda - \pi^2}} \quad & \lambda > \frac{\pi^2}{6}. \end{array} \right.
\end{equation}
The piecewise graph of the right-hand side of the zero-mean constraint
(\ref{zero-mean}) on $(-\infty,0)$ and $(\pi^2/6,\infty)$
is shown on Figure \ref{fig-root}. The first line of the zero-mean constraint (\ref{zero-mean}) is equivalent
to the transcendental equation (\ref{transc}) and it has
only one simple zero at $\lambda_1 \approx -0.2262$. The second line of (\ref{zero-mean})
does not have any zeros. Hence, $\lambda_1 < 0$ is the only eigenvalue in $\sigma_p(L)$.
\end{proof}

\begin{figure}[htbp]
\center
\includegraphics[scale=0.5]{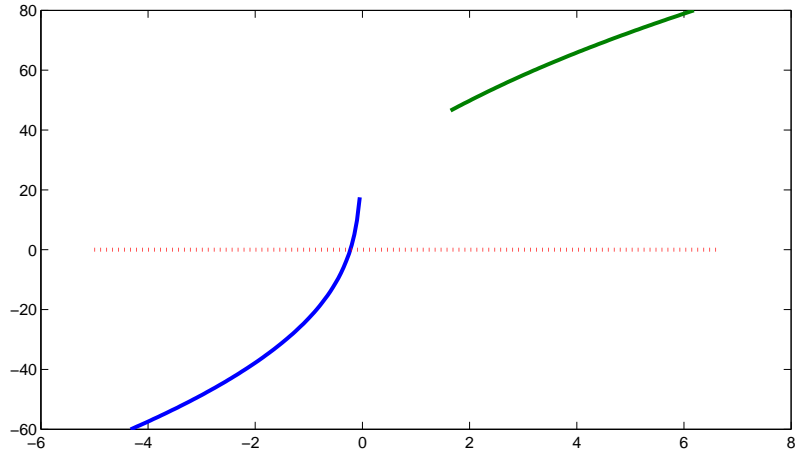}
\caption{The graph of the right-hand side of the zero-mean constraint (\ref{zero-mean})
on $(-\infty,0)$ and $(\pi^2/6,\infty)$ as a function of the spectral parameter $\lambda$.
Only one simple zero $\lambda_1<0$ exists.}
\label{fig-root}
\end{figure}

\begin{remark}
\label{rem-spectrum}
For the smooth periodic waves of Lemma \ref{lemma-smooth-wave},
we proved in \cite{GP17} that $\sigma(L)$ in $\dot{L}^2_{\rm per}$ includes a simple negative eigenvalue,
a simple zero eigenvalue with the eigenvector $\partial_z U$, and the rest of the spectrum is positive and bounded away
from zero. Hence, the spectral gap is present in the case of smooth periodic waves.
This enabled us   in \cite{GP17} to use Hamilton-Krein index theory to deduce that $\sigma(\partial_z L) \subset i \mathbb{R}$
and hence to deduce spectral stability of the smooth periodic waves according to Definition \ref{def-spectral-stability}.
By the standard analysis involving the conserved quantity (\ref{conserved-energy-lin}), see \cite{Haragus-Li},
this spectral stability result transfers to linear stability of the smooth periodic waves
according to Definition \ref{def-linear-stability}. In the spectral problem for the peaked periodic wave,
however, this  spectral gap is not present. Therefore, we are not able to deduce spectral instability
of the peaked periodic wave from the spectrum of $L$.
\end{remark}

\section{Discussion}
\label{sect-discussion}

We have studied peaked periodic traveling wave solutions of the reduced Ostrovsky equation \eqref{redOst}.
We found that the peaked periodic wave with parabolic shape $U_*$ is the unique periodic traveling wave
with a single minimum per period and that the boundary-value problem (\ref{ODE}) does not admit H\"{o}lder continuous solutions,
see Lemma \ref{lemma-no}. As a consequence, existence of cusped waves obtained in \cite{Step} as well as existence
of small-amplitude peaked waves found in \cite{Hakkaev1} was disproven.

Furthermore, we proved that the peaked periodic wave $U_*$  is linearly unstable with respect to
co-periodic perturbations in the space $X^1_{\rm per}$, which is the maximal domain of the linearized operator $\partial_z L$,
see Lemma \ref{lemma-full}. This result was obtained using sharp exponential bounds on the $L^2$ norm of perturbations $v$ of $U_*$ in $X^1_{\rm per}$
satisfying the Cauchy problem \eqref{linOst} with the peaked periodic wave $U_*$ and for the wave speed $c_*$.

Passing from linear to nonlinear instability is often a delicate issue. Several authors have shown that linear instability
directly implies nonlinear instability if a part of the spectrum of the linearized operator
is located in the right half of the complex plane, see for instance \cite{Friedlander1997,Shatah} and
Theorem 5.1.5 in \cite{Henry1981}. However, these approaches do not work for the reduced Ostrovsky equation
since the linearized evolution is defined in the space $X^1_{\rm per}$ whereas the nonlinear evolution is
defined in $\dot H^s_{\rm per}$ with $s > 3/2$. It is not clear if the local well-posedness results can be extended
to the space $X^1_{\rm per}$. It is also unclear how the peaks of the peaked periodic wave move under the flow
of the reduced Ostrovsky equation  \eqref{redOst}. For these reasons, nonlinear instability of the peaked periodic wave
in the reduced Ostrovsky equation remains an open problem for now.

\bigskip

{\bf Acknowledgements.} This project was initiated during the research program on
Nonlinear Water Waves at Isaac Newton Institute at Cambridge in August 2017.
Both authors thank Mats Ehrnstr\"{o}m for useful discussions during the program and afterward.
Computations in the proof of Lemma \ref{lemma-spectrum} were performed back in 2015
in collaboration with Ted Johnson (UCL).
DEP acknowledges a financial support from the State task program in the sphere
of scientific activity of Ministry of Education and Science of the Russian Federation
(Task No. 5.5176.2017/8.9) and from the grant of President of Russian Federation
for the leading scientific schools (NSH-2685.2018.5). Finally, the authors thank
the referees for useful remarks which helped to improve the manuscript.

\end{document}